\documentclass[12pt,a4paper]{amsart}         
\usepackage[margin=3cm]{geometry}
\usepackage[backend=bibtex,url=false,doi=true,isbn=false,giveninits=true,style=numeric,sorting=nyt]{biblatex}
\addbibresource{ree_unitals.bib}
\usepackage{amsmath,amsfonts,amssymb,amsthm}
\usepackage{mathrsfs,epsfig,epstopdf,url}
\usepackage{enumerate}
\usepackage{xcolor}
\usepackage{float}
\usepackage{tikz}
\restylefloat{table}
\usepackage{multirow}
\usepackage{array}

\usepackage{pgfplots}

\theoremstyle{definition}
\newtheorem{definition}{Definition}%

\theoremstyle{plain}
\newtheorem{theorem}[definition]{Theorem}
\newtheorem{lemma}[definition]{Lemma}
\newtheorem{proposition}[definition]{Proposition}

\newtheorem{corollary}[definition]{Corollary}
\newtheorem{remark}[definition]{Remark}

\newcommand{\FF}{\mathbb{F}_8}

\DeclareMathOperator{\tr}{Tr}

\newcommand{\PG}{\mathsf{PG}}

\newcommand{\Ree}{\mathsf{Ree}}

\DeclareMathOperator{\Aut}{Aut}

\newcommand{\K}{\mathcal{K}}
\newcommand{\RRR}{\mathcal{R}(3)}

\title{Embeddings of Ree unitals in a projective plane over a field}

\author{G\'abor P. Nagy}
\address{Department of Algebra \\
	Budapest University of Technology and Economics\\
	Egry J\'ozsef utca 1\\
	H-1111 Budapest, Hungary}
\address{Bolyai Institute \\
	University of Szeged \\
	Aradi v\'ertan\'uk tere 1\\
	H-6720 Szeged, Hungary}
\email{nagyg@math.bme.hu}

\thanks{Support provided from the National Research, Development and Innovation Fund of Hungary, NKFIH-OTKA Grants 119687, 115288 and SNN 132625.}

\keywords{Ree unital, Ree group, projective embedding, admissible embedding}
\subjclass[2010]{51E20, 05B05}

\begin{document}

\begin{abstract}
We show that the Ree unital $\mathcal{R}(q)$ has an embedding in a projective plane over a field $F$ if and only if $q=3$ and $\mathbb{F}_8$ is a subfield of $F$. In this case, the embedding is unique up to projective linear transformations. Besides elementary calculations, our proof uses the classification of the maximal subgroups of the simple Ree groups. 
\end{abstract}

\maketitle

\section{Introduction}

A $t$-$(n, k, \lambda)$ \textit{design,} or equivalently a \textit{Steiner system} $S_\lambda(t,k,n)$, is a finite simple incidence structure consisting of $n$ points and a number of blocks, such that every block is incident with $k$ points and every $t$-subset of points is incident with exactly $\lambda$ blocks. Let $\mathcal{D}=(\mathcal{P},\mathcal{B},I)$ be a design and $\Pi=(\mathcal{P}',\mathcal{B}',I')$ a projective plane. The map $\varrho: \mathcal{P} \cup \mathcal{B} \to \mathcal{P}' \cup \mathcal{B}'$ is an \textit{embedding} of $\mathcal{U}$, provided it is injective, $\varrho(\mathcal{P})\subseteq \mathcal{P}'$, $\varrho(\mathcal{B})\subseteq\mathcal{B}'$, and 
\[\forall P\in \mathcal{P}, \forall B\in \mathcal{B}: \quad P I B \Leftrightarrow \varrho(P)I'\varrho(B).\]
The embedding $\varrho$ is \textit{admissible} (or \textit{equivariant}), if for any automorphism $\alpha$ of $\mathcal{U}$, there is a collineation $\beta$ of $\Pi$ such that $\varrho(P^\alpha)=\varrho(P)^\beta$ holds for all $P\in \mathcal{P}$. 

An \textit{abstract unital} or a \textit{unital design} of order $n$ is a $2$-$(n^3+1,n+1,1)$ design. The problem of the embeddings of abstract unitals in projective planes is a classical one with many old unsolved questions, see \cite{MR2795696,MR3533345,MR4008656,MR3963225,MR3692296,MR3812020,MR193136,MR2368994}. The classical \textit{hermitian unital} $\mathcal{H}(q)$ of order $q$ is constructed from the set of absolute points and non-absolute lines of the desarguesian plane $\PG(2,q^2)$. The abstract hermitian unital $\mathcal{H}(q)$ has a natural embedding in $\PG(2,q^2)$, which is unique up to projective equivalence, see \cite{MR3692296,MR4026645}. Moreover, this embedding is admissible. 

Another class of abstract unitals of order $q=3^{2n+1}$ was discovered by L\"uneburg \cite{MR193136}. Let $\Ree(q)={}^2G_2(q)$ be the \textit{Ree group} of order $(q^3+1)q^3(q-1)$, $q=3^{2n+1}$, see \cite{MR1777008,MR955589}. Then $\Ree(q)$ has a $2$-transitive action on $q^3+1$ points, namely by conjugation on the set of all Sylow $3$-subgroups. The pointwise stabilizer of two points $P,Q$ is cyclic of order $q-1$ and thus contains a unique involution $t$. It follows that $\Ree(q)$ has a unique conjugacy class of involutions, and any involution $t$ fixes exactly $q+1$ points. The blocks of the \textit{Ree unital} $\mathcal{R}(q)$ are the sets of fixed points of the involutions of $\Ree(q)$. $\mathcal{R}(q)$ admits the $\Ree(q)$ as a $2$-transitive automorphism group; the full automorphism group is larger, for $n\geq 1$, admitting also the field automorphism, see \cite{MR2795696}. The smallest Ree unital $\RRR$ and the smallest Ree group $\Ree(3)\cong\mathsf{P\Gamma{}L}(2,8) \cong \mathsf{PSL}(2,8)\rtimes C_3$ are a little different from the general case, see \cite{MR655065,MR847092}. For $q>3$, $\Ree(q)$ is simple.

L\"uneburg \cite{MR193136} showed that the Ree unital $\mathcal{R}(q)$ has no admissible embeddings in projective planes of order $q^2$ (desarguesian or not). For $q = 3$, Gr\"uning \cite{MR847092} proved that the smallest Ree unital $\RRR$ has no embedding in any projective plane of order $9$. Montinaro \cite{MR2368994} extended these results by showing that for $q\neq 3$ and $n\leq q^4$, $\mathcal{R}(q)$ has no admissible embedding in a projective plane of order $n$. Moreover, if $\RRR$ is embedded in a projective plane $\Pi$ of order $n\leq 3^4$ in an admissible way, then either $\Pi\cong \PG(2,8)$, or $n=2^6$, see \cite[Theorem 5]{MR2368994}. 

In this paper, we completely characterize the embeddings of $\RRR$ in a projective plane over a field, extending Montinaro's result. 
\begin{theorem} \label{thm:R3_embedding}
Let $F$ be a field and $\varphi:\RRR\to \PG(2,F)$ an embedding. Then the following hold:
\begin{enumerate}[(i)]
\item $\FF$ is a subfield of $F$, and the image of $\varphi$ is contained in a subplane of order $8$.
\item The embedding is unique up to $\Aut(\RRR)$ and $\mathsf{PGL}(3,F)$. 
\item The embedding is admissible.
\end{enumerate}
\end{theorem}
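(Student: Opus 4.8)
The plan is to combine a combinatorial normalisation of an arbitrary embedding, an explicit coordinatisation pinning down the field, and a rigidity argument for uniqueness and admissibility.

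I would begin with the incidence dictionary. If $\varphi$ is an embedding, three image points are collinear in $\PG(2,F)$ exactly when the corresponding points of $\RRR$ lie on a common block: if $\varphi(P),\varphi(Q),\varphi(R)$ are collinear on a line $m$, then $m=\varphi(B)$ for the block $B$ through $P,Q$, so $\varphi(R)\in\varphi(B)$ and $R\in B$. Hence every line of $\PG(2,F)$ meets $\varphi(\mathcal P)$ in $0$, $1$, $2$, or $4$ points, the $4$-secants being precisely the $63$ images of blocks; and the double count $\sum_m\binom{|m\cap\varphi(\mathcal P)|}{2}=\binom{28}{2}=378=63\binom 42$ forces the number of $2$-secants to vanish. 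So every line is a $0$-, $1$-, or $4$-secant, through each image point there pass exactly $9$ $4$-secants and no $2$-secant, and a short count shows $\varphi(\mathcal P)$ contains a quadrilateral. After a transformation from $\mathsf{PGL}(3,F)$ we may assume a fixed quadrilateral of $\RRR$ is carried to the standard frame, and replacing $F$ by the subfield generated by the coordinates of $\varphi(\mathcal P)$ it remains to prove this subfield is $\FF$ and that the configuration is then essentially determined.

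The computational core is to propagate coordinates from the frame along block-lines: each not-yet-placed point lies on blocks whose three other points become, in turn, already coordinatised, and since the image of a block is the line spanned by its four points, such a block-line is a join of known points and the new point is the intersection of two known lines. The construction is purely rational and stops after finitely many steps, so the generated subfield is a finite field, necessarily of order at least $8$ by the previous paragraph. I expect the incidences among well-chosen concurrent and transversal triples of blocks to force this field to be $\FF$ — a cross-ratio of four collinear block points being made to satisfy an irreducible cubic over the prime field, such as $x^{3}=x+1$ — while the injectivity of $\varphi$ rules out every characteristic but $2$, two of the propagated points coinciding in odd characteristic; this gives part (i). For existence one exhibits $28$ explicit points of $\PG(2,8)$ (equivalently, a $28$-point orbit of a subgroup of $\mathsf{P\Gamma L}(3,8)$ isomorphic to $\mathsf{P\Gamma L}(2,8)$) whose $4$-secants form a copy of $\RRR$. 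The main obstacle is to organise the case analysis so that it is simultaneously complete (no solution over any field other than $\FF$, and no residual freedom beyond the $\FF$-automorphisms) and consistent (the propagated incidences really close up to all of $\RRR$); the $2$-transitivity of $\Aut(\RRR)$ helps keep the choices symmetric.

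Finally I would deduce (ii) and (iii) from the rigidity just obtained. The coordinatisation shows that, with its quadrilateral placed at the standard frame, an embedding is determined up to the choice of a root of the cubic above — i.e.\ up to the $C_3$ of $\FF$-automorphisms, each of which is realised on $\PG(2,\FF)$ by a collineation (the Frobenius $[x:y:z]\mapsto[x^2:y^2:z^2]$), and which corresponds to the ``field automorphism'' in $\Aut(\RRR)=\mathsf{P\Gamma L}(2,8)$. Part (ii) follows, since two embeddings then differ by a $\mathsf{PGL}(3,F)$-element matching their frames and a power of that field automorphism. Part (iii) is immediate: given $\alpha\in\Aut(\RRR)$, normalise $\varphi\circ\alpha$ by an element of $\mathsf{PGL}(3,F)$ so that the quadrilateral lands at the standard frame; then $\varphi\circ\alpha$ agrees, up to a Frobenius power, with $\varphi$, so $\alpha$ is induced by a collineation. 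Here the group theory is genuinely needed only to pin the residual freedom down to $C_3$ and to identify the stabiliser of $\varphi(\mathcal P)$ in $\mathsf{P\Gamma L}(3,8)$ with $\mathsf{P\Gamma L}(2,8)$ acting as $\Aut(\RRR)$.
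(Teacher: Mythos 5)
Your overall strategy coincides with the paper's: normalise part of the configuration to a frame, let the remaining incidences impose polynomial constraints on the free coordinates, conclude that the coordinates lie in $\FF$ and that characteristic $2$ is forced, and then get uniqueness and admissibility from the resulting rigidity. The opening combinatorial observations (every $3$-secant of $\varphi(\mathcal{P})$ is a block image, the count $\binom{28}{2}=63\binom{4}{2}$ killing the $2$-secants) are correct and useful. But the core of the proof is missing. You write that you ``expect the incidences among well-chosen concurrent and transversal triples of blocks to force this field to be $\FF$''; identifying \emph{which} incidences do this is precisely the hard content, and it is what the paper's Sections~\ref{sec:pentagons}--\ref{sec:superonan} supply. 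The paper takes a super O'Nan configuration $\{a,c_1,\dots,c_4\}$ (five pairwise intersecting blocks in general position, with stabiliser $A_4$), forms the derived points $D_{ijk\ell}=(a\cap c_i)(c_j\cap c_\ell)\cap(a\cap c_j)(c_k\cap c_\ell)$, and proves --- via the known dual model over the conic in $\PG(2,8)$ --- that certain quadruples of the $D_{ijk\ell}$ are collinear and that $a\cap c_4$, $D_{1234}$, $D_{3241}$ lie on a block (Proposition~\ref{pr:mainSOC}). Transporting these forced collinearities through an arbitrary embedding yields the determinant equations whose difference gives $(u-v)^2=1$, hence $\mathrm{char}\,F=2$ and $u+v+1=0$, and then $v^3+v^2+1=0$. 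Without exhibiting such configurations (or a concrete substitute), your coordinate propagation proves nothing: the frame plus one further point leaves two free parameters, and no contradiction or algebraic relation appears until a design-theoretic incidence that is \emph{not} automatic in $\PG(2,F)$ is brought to bear.

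A secondary but genuine error: ``the construction is purely rational and stops after finitely many steps, so the generated subfield is a finite field'' is false as stated --- a finitely generated subfield (e.g.\ $\mathbb{F}_2(u,v)$ with $u,v$ transcendental) need not be finite. Finiteness is a \emph{conclusion} of the incidence equations forcing $u,v$ to be algebraic over $\mathbb{F}_2$, not a consequence of the rationality of the propagation. Finally, for (iii) your plan reduces admissibility to realising the Frobenius of $\FF$ by the collineation $[x:y:z]\mapsto[x^2:y^2:z^2]$; note that this is a collineation of $\PG(2,F)$ only when $F$ is perfect, so for general $F$ this step needs care (the paper avoids the issue by invoking Montinaro's theorem for the subplane of order $8$). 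The argument for (ii) --- two embeddings differ by a frame-matching element of $\mathsf{PGL}(3,F)$ and a power of the field automorphism absorbed into $\Aut(\RRR)$ --- is fine once the rigidity in (i) is actually established.
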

Our main result is the following:
\begin{theorem} \label{thm:noemb}
Let $n$ be a positive integer, and $q=3^{2n+1}$. Suppose that $\Pi$ is a projective plane such that for each embedding $\varphi:\RRR\to \Pi$, the image $\varphi(\RRR)$ is contained in a pappian subplane. Then the Ree unital $\mathcal{R}(q)$ has no embedding in $\Pi$. In particular, $\mathcal{R}(q)$ has no embedding in a projective plane over a field. 
\end{theorem}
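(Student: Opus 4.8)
The plan is an argument by contradiction. Assume $q=3^{2n+1}$ with $n\ge 1$ and that $\varphi\colon\mathcal{R}(q)\to\Pi$ is an embedding, $\Pi$ having the stated property. Since $2n+1>1$, the classification of maximal subgroups of the simple Ree groups lets one follow a chain of subfield subgroups $\Ree(3)<\Ree(3^{d_1})<\dots<\Ree(q)$ and conclude that $\Ree(q)$ contains subgroups isomorphic to $\Ree(3)\cong\PGammaL$; moreover each such subgroup has a point–orbit $S$ of length $28$ on which the induced incidence structure — with blocks $B\cap S$, where $B$ runs over the blocks of $\mathcal{R}(q)$ meeting $S$ in at least two points — is a copy of $\RRR$. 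Call such an $S$ a \emph{subunital}. I would first record the routine fact that the restriction of $\varphi$ to $S$, with block map $B\cap S\mapsto\varphi(B)$, is again an embedding: injectivity on blocks follows since two points of $\RRR$ lie on a unique block, together with the observation that a line of $\Pi$ meeting $\varphi(S)$ in at least three points must contain the image of a whole block, hence meet $\varphi(S)$ in exactly four points.

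By hypothesis the image of this restricted embedding lies in a pappian subplane, so Theorem~\ref{thm:R3_embedding} applies and $\langle\varphi(S)\rangle$ is a subplane $\Pi_S\cong\PG(2,\FF)$ of order $8$ (it cannot be smaller, as $|\varphi(S)|=28>7$). Thus every subunital $S$ carries an order-$8$ subplane $\Pi_S\supseteq\varphi(S)$, and the heart of the proof is to show that these subplanes all coincide. The geometric ingredient is standard: the intersection of two subplanes of a projective plane is either degenerate (contained in the union of a point and a line) or a subplane, and the only proper non-degenerate subplane of $\PG(2,\FF)$ is a Fano subplane; hence two distinct order-$8$ subplanes meet in at most a Fano subplane, so their intersection never contains a quadrangle together with a further point outside the Fano subplane spanned by it. Consequently, if two subunitals $S,S'$ satisfy that $\varphi(S\cap S')$ contains a quadrangle of $\mathcal{R}(q)$ (four points, no three on a common block) plus a fifth point whose image lies outside the Fano subplane spanned by the image quadrangle, then $\langle\varphi(S\cap S')\rangle=\Pi_S$ and therefore $\Pi_S=\Pi_{S'}$.

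Now introduce the graph $\Gamma$ on the set of subunitals, joining $S$ and $S'$ when they overlap as in the previous sentence; then $\Pi_S$ is constant on each connected component of $\Gamma$. Since all $\Ree(3)$-subgroups of $\Ree(q)$ are conjugate — again by the maximal-subgroup classification — $\Ree(q)$ acts transitively on subunitals, and being transitive on points it follows that the subunitals cover $\mathcal{R}(q)$. Hence, once $\Gamma$ is shown to be connected, $\varphi(\mathcal{R}(q))$ is contained in a single order-$8$ subplane, which has only $73$ points, whereas $|\mathcal{R}(q)|=q^3+1\ge 27^3+1$ — a contradiction. The main obstacle is exactly the connectivity of $\Gamma$: for a fixed subunital $S_0$ and an arbitrary point $P$ of $\mathcal{R}(q)$ one must exhibit a subunital $S\ni P$, or a chain of subunitals reaching such an $S$, meeting $S_0$ in a quadrangle-plus-a-point as above. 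I expect this to come down to an elementary but careful analysis of the orbits of $\Ree(q)$ on small point configurations — using the coordinatisation of $\mathcal{R}(q)$ and the structure $[q^3]\rtimes C_{q-1}$ of a point stabiliser — showing that subunitals with such rich overlaps are abundant; equivalently, that every point lies in a subunital meeting $S_0$ in a configuration generating $\Pi_{S_0}$.

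Finally, the last assertion is immediate: a projective plane $\PG(2,F)$ over a field $F$ is itself pappian, as is every one of its subplanes, so the hypothesis of the theorem holds automatically, and hence $\mathcal{R}(q)$ does not embed in $\PG(2,F)$ for $q=3^{2n+1}$ with $n\ge1$. Combined with Theorem~\ref{thm:R3_embedding}, which treats $q=3$, this establishes the dichotomy stated in the abstract.
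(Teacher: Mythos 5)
Your proposal shares its first step with the paper (the subdesign $\mathcal{D}\cong\RRR$ of $\mathcal{R}(q)$ with stabilizer $\Ree(3)$, whose image lands in an order-$8$ subplane by Theorem~\ref{thm:R3_embedding}), but then diverges: you try to glue the order-$8$ subplanes attached to the various subunitals into a single one and derive a contradiction by counting points. The gluing step is exactly where the argument breaks, and you acknowledge that you have not proved it. The difficulty is not merely technical: for your graph $\Gamma$ to have any edges at all, two \emph{distinct} subunitals must share five points forming a quadrangle plus a point outside the Fano subplane it spans. There is no evidence that such overlaps exist. A crude count makes them implausible: the number of subunitals is $|\Ree(q)|/|\Ree(3)|=(q^3+1)q^3(q-1)/1512$, each containing $\binom{28}{5}$ quintuples, while $\mathcal{R}(q)$ has roughly $q^{15}/120$ quintuples; so the average number of subunitals through a quintuple is of order $q^{-8}$, i.e.\ a quintuple lying in two distinct subunitals would be an extremely special event, and quite possibly no such quintuple exists (already two points lie in only $(q-1)/2$ common subunitals, and the pointwise stabilizer of three points in general position is trivial or very small). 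Without establishing at least non-emptiness, and then connectivity, of $\Gamma$, the proof does not go through; and the heuristics suggest this route cannot be repaired.

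The paper takes a different and more robust path that you should compare with. It uses the subdesign only to establish a single local fact (Corollary~\ref{coro:concurrent}): for a Sylow $2$-subgroup $\{1,a_1,\dots,a_7\}$ of $\Ree(q)$, the seven lines $\varphi(a_1),\dots,\varphi(a_7)$ are concurrent (this is where the pappian hypothesis and Theorem~\ref{thm:R3_embedding} enter, via the identification of blocks with involutions and of the concurrency point with a tangent point of the dual conic). This local concurrency is then propagated globally not through overlaps of subunitals but through the \emph{commuting graph of involutions}: inside a centralizer $C_2\times\mathsf{PSL}(2,q)$ the graph on the involution class is connected because $D_{q+1}$ is maximal in $\mathsf{PSL}(2,q)$ (Lemma~\ref{lm:connectedcomponent}), and then primitivity of $\Ree(q)$ on its involution class connects everything, forcing \emph{all} block-images to pass through one point --- an immediate contradiction with injectivity of $\varphi$ on points. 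Note that this uses group-theoretic primitivity where you need a hard incidence-geometric overlap statement; that substitution is the essential idea your proposal is missing.
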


These results suggest that the problem of projective embeddings of the Ree unitals can be reduced to the question whether the smallest Ree unital has an embedding in a non-desarguesian projective plane. This question is surprisingly hard, even if we assume that the embedding is admissible.

The structure of the paper is as follows. In section \ref{sec:prelim}, we present the embedding of $\RRR$ in $\PG(2,8)$, and some technical lemmas to ease the calculations in $\PG(2,8)$. In section \ref{sec:pentagons}, we study sets of five points of $\PG(2,8)$, determining ten external lines of a conic. Such \textit{external pentagons} correspond to \textit{super O'Nan configurations} of $\RRR$; their properties are listed in section \ref{sec:superonan}. In sections \ref{sec:R3emb} and \ref{sec:Rqnonemb}, we prove Theorems \ref{thm:R3_embedding} and \ref{thm:noemb}.

\section{Preliminaries} \label{sec:prelim}

The embedding of $\RRR$ in $\PG(2,8)$ deserves special attention. The construction was first given by Gr\"uning \cite{MR847092}, who attributes the idea to F.C. Piper. The embedding is slightly simpler to present in the dual setting. Let $\K$ be a non-singular conic in $\PG(2,8)$. The tangents of $\K$ have a common point $N$, which is called the \textit{nucleus} of $\K$ (see \cite{MR0333959}). The set $\mathcal{O}=\K\cup \{N\}$ is a hyperoval, that is, a set of $10$ points such that each line intersects it in $0$ or $2$ points. The point $P$ is \textit{external}, if $P\not\in \mathcal{O}$. The line $\ell$ is \textit{external,} if $\ell\cap \mathcal{O}= \emptyset$. There are $63$ external points, $28$ external lines, and each external point is incident with $4$ external lines. In other words, the external points and the external lines form a (dual) unital $\mathcal{U}$ of order $3$. Let $G=\mathsf{P{\Gamma}O}(3,8)$ be the group of projective semilinear transformations of $\PG(2,8)$, preserving $\mathcal{O}$. $G$ is isomorphic to 
\[\mathsf{P{\Gamma}L}(2,8) \cong \mathsf{PSL}(2,8) \rtimes C_3,\]
and acts $2$-transitively on the set of external lines. Hence, $\mathcal{U}$ has a $2$-transitive automorphism group and $\RRR \cong \mathcal{U}$ by \cite{MR655065}. We call the isomorphism $\varphi:\RRR \to \mathcal{U}$ a \textit{dual embedding} of $\RRR$ into $\PG(2,8)$ with respect to the conic $\mathcal{K}$. 

To make the computation in $\FF$ more transparent, we fix a root $\gamma\in\FF$ of the polynomial $X^3+X+1=0$ in $\FF$. Then
\[\gamma^4=\gamma^2+\gamma, \quad
\gamma^5=\gamma^2+\gamma+1, \quad
\gamma^6=\gamma^2+1, \quad
\gamma^7=1.\] 
The trace map of $\mathbb{F}_8$ over $\mathbb{F}_2$ is defined as
\[\tr(x)=x+x^2+x^4.\]
We fix the coordinate frame $(X,Y,Z)$ in $\PG(2,8)$ and extend the action of the Frobenius automorphism $\Phi:x\mapsto x^2$ to the points and lines of $\PG(2,8)$. In this way, we obtain a projective semilinear transformation of order $3$. For $c\in \FF$, the map 
\[\tau_c:(x,y,z) \to (x+cz,y+c^2z,z)\]
is an elation with axis $Z=0$.
\begin{lemma} \label{lm:Gamma}
\begin{enumerate}[(i)]
\item If $c\neq 0$ then the line $Y=mX+bZ$ is $\tau_c$-invariant if and only if $m=c$.
\item $\Phi$ and $\tau_c$ ($c\in \FF$) preserve the conic $\K:X^2+YZ=0$ of $\PG(2,8)$  and the line $\ell_\infty:Z=0$ at infinity.
\item The line $Y=mX+bZ$ is external to $\K$ if and only if $\tr(b/m^2)=1$. 
\item Let $\Gamma$ denote the group
\[\Gamma = \{\tau_c \mid \tr(c)=0\}\]
of elations. $\Gamma$ is elementary abelian of order $4$. By conjugation, $\Phi$ permutes the nontrivial elements of $\Gamma$. 
\item The group $G_0=\langle \Gamma, \Phi \rangle$ of semilinear transformations is isomorphic to $A_4$. 
\end{enumerate}
\end{lemma}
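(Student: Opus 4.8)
The plan is to treat all five items as explicit computations in $\PG(2,\FF)$, relying on two features of characteristic~$2$: the map $u\mapsto u^2$ is additive, and $u^8=u$ for every $u\in\FF$.

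For (i) I would take a generic affine point $(x,\,mx+bz,\,z)$ of the line $Y=mX+bZ$, apply $\tau_c$, and require the image to satisfy the same equation; this collapses to $c^2z=mcz$, i.e.\ to $c^2=mc$, so for $c\neq0$ the line is $\tau_c$-invariant precisely when $m=c$, the point at infinity $(1,m,0)$ being fixed in any case. (The same computation incidentally identifies $\tau_c$ as an elation with axis $Z=0$ and centre $(1,c,0)$.) Item (ii) is immediate: $(x+cz)^2+(y+c^2z)z=x^2+yz$ because the cross terms cancel in characteristic~$2$, and $(x^2)^2+(y^2)(z^2)=(x^2+yz)^2$, so both $\tau_c$ and $\Phi$ preserve $\K$; both obviously fix $Z=0$.

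For (iii), substituting $Y=mX+bZ$ into $X^2+YZ=0$ gives $X^2+mXZ+bZ^2=0$. The point at infinity $(1,m,0)$ is never on $\K$, and $m=0$ yields a tangent line, so an external line must have $m\neq0$; setting $Z=1$ and $X=mU$ turns the equation into $U^2+U+b/m^2=0$, which by the usual additive-polynomial criterion has a root in $\FF$ if and only if $\tr(b/m^2)=0$. Hence the line is external exactly when $\tr(b/m^2)=1$. For (iv), the composition law $\tau_c\tau_{c'}=\tau_{c+c'}$ (again using $(c+c')^2=c^2+c'^2$) shows that $c\mapsto\tau_c$ is an isomorphism from $(\FF,+)$ onto $\{\tau_c\mid c\in\FF\}$; restricting to $\ker\tr$, a $2$-dimensional $\mathbb{F}_2$-subspace, shows $\Gamma$ is elementary abelian of order $4$. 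A direct calculation using $u^8=u$ yields $\Phi\tau_c\Phi^{-1}=\tau_{c^2}$, and since $\tr(c^2)=\tr(c)$, conjugation by $\Phi$ preserves $\Gamma$ and acts on it as the order-$3$ Frobenius of $\ker\tr$, hence permutes its three involutions cyclically.

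Finally, for (v): $\Phi$ has order $3$ and normalizes the abelian group $\Gamma$ of order $4$, and $\langle\Phi\rangle\cap\Gamma=1$ since $3$ and $4$ are coprime, so $G_0=\Gamma\rtimes\langle\Phi\rangle$ has order $12$ with $\Phi$ acting nontrivially on $\Gamma\cong(\mathbb{Z}/2)^2$; the only group of order $12$ with this structure is $A_4$. None of this is genuinely hard; the two steps that deserve a little care are the additive-polynomial criterion in (iii) and checking in (iv) that the conjugation action of $\Phi$ on $\Gamma$ is really nontrivial, which is exactly what forces $G_0\cong A_4$ in (v) rather than $\mathbb{Z}/2\times\mathbb{Z}/6$.
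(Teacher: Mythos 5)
Your proposal is correct and follows essentially the same route as the paper, which simply records the image line $Y=mX+(b+c^2+cm)Z$ for (i), declares (ii), (iv), (v) immediate, and invokes the same trace criterion for the reducibility of $X^2+mX+b$ in even characteristic for (iii). Your write-up just makes explicit the details (the tangency of the $m=0$ lines, the relation $\Phi\tau_c\Phi^{-1}=\tau_{c^2}$, and the fixed-point-freeness of $\Phi$ on $\Gamma\setminus\{1\}$) that the paper leaves to the reader.
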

\begin{proof}
$\tau_c$ maps $Y=mX+bZ$ to $Y=mX+(b+c^2+cm)Z$. This implies (i). (ii) is trivial. (iii) follows from the fact that in a finite field $\mathbb{F}_q$ of even order, the quadratic form $X^2+mX+b$ is reducible if and only if $\tr(b/m^2)=0$. (iv) and (v) are immediate. 
\end{proof}

Finally, we present a useful elementary result on groups acting on graphs:
\begin{lemma} \label{lm:connectedcomponent}
Let $G$ be a group acting primitively on the set of vertices of the graph $\Gamma$. Then $\Gamma$ is either empty or connected.
\end{lemma}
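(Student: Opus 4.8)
The plan is to exploit the standard dictionary between connected components of a graph and block systems of its automorphism group. First I would recall the definition: a permutation group $G$ on a set $V$ is primitive precisely when it is transitive and the only $G$-invariant partitions of $V$ are the partition into singletons and the partition with a single part. Since $G$ acts on the vertex set of $\Gamma$ through graph automorphisms, the point is to produce a suitable $G$-invariant partition.

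The natural candidate is the partition $\mathcal{C}$ of the vertex set into the connected components of $\Gamma$. This partition is $G$-invariant: every graph automorphism sends edges to edges, hence walks to walks, hence connected components to connected components, so $G$ permutes the members of $\mathcal{C}$. Thus $\mathcal{C}$ is a $G$-invariant partition of the vertex set.

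Now I would simply invoke primitivity. If $\mathcal{C}$ is the partition with a single part, then $\Gamma$ has exactly one connected component, i.e.\ $\Gamma$ is connected. If instead $\mathcal{C}$ is the partition into singletons, then each connected component consists of a single vertex, which forces $\Gamma$ to have no edges, i.e.\ $\Gamma$ is empty. (In the degenerate case $|V|\le 1$ both alternatives hold trivially.) These are the only two possibilities, which is the assertion.

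There is essentially no obstacle: the entire content is the observation that connected components form a block system, and the only thing requiring a sentence of justification is that automorphisms preserve connectedness, which is immediate from the fact that they preserve adjacency. It is worth noting that transitivity of $G$ is not even needed for the conclusion — only the non-existence of nontrivial $G$-invariant partitions is used — so the lemma holds verbatim for any group admitting no such partition.
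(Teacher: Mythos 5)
Your proof is correct and is exactly the paper's argument: the connected components of $\Gamma$ form a $G$-invariant partition (a block system), so primitivity forces either a single component or all singletons, i.e.\ $\Gamma$ is connected or edgeless. The paper states this in one line; your write-up just spells out the same observation in more detail.
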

\begin{proof}
The connected components of $\Gamma$ are blocks of imprimitivity of $G$. 
\end{proof}

\section{External pentagons in $\PG(2,8)$} \label{sec:pentagons}

Let $p$ be a prime, and $q=p^e$ be a prime power. Let $\K$ be a non-singular conic in $\PG(2,q)$. For any line $\ell$, we have $|\K\cap \ell|\leq 2$. We call $\ell$ \textit{secant,} \textit{tangent} or \textit{external} to $\K$, according if $|\K\cap \ell|$ is $2$, $1$, or $0$. If $q$ is even, then all tangents pass through a common point, the \textit{nucleus} of $\K$. (See \cite{MR0333959}.) The group of collineations  preserving $\K$ is $\mathsf{P{\Gamma}O}(3,q)$. One has the isomorphisms $\mathsf{PGO}(3,q) \cong \mathsf{PGL}(2,q)$ and $\mathsf{P{\Gamma}O}(3,q) \cong \mathsf{P{\Gamma}L}(2,q)$. 

\begin{definition}
Let $\K$ be a conic in $\PG(2,q)$. We say that the points $P_0,\ldots,P_4$ in general position form an \textit{external pentagon} with respect to $\K$, if $P_iP_j$ are external lines of $\K$, $0\leq i<j\leq 4$. The external pentagon is said to have type $A_4$, if there is a group $G_0$ of collineations preserving $\K$ and $\{P_0,\ldots,P_4\}$, such that $G_0\cong A_4$. 
\end{definition}

Notice that the points of an external pentagon are not in $\mathcal{K}$, and if $q$ is even, then they are also distinct from the nucleus of $\mathcal{K}$.

\begin{lemma}
Let $\mathcal{P}=\{P_0,\ldots,P_4\}$ be an external pentagon of type $A_4$ in $\PG(2,q)$ with respect to the conic $\K$. Then $q$ is even and the following hold:
\begin{enumerate}[(i)]
\item $G_0$ fixes one point of $\mathcal{P}$ and acts $2$-transitively on the remaining four. 
\item If $q=8$, then $G_0\cong A_4$ is the full stabilizer of the sets $\K$ and $\mathcal{P}$ in the collineation group of $\PG(2,8)$. 
\end{enumerate}
\end{lemma}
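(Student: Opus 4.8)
The plan is to study the action of $G_0\cong A_4$ on the five points of $\mathcal{P}$ first, deduce from this that $q$ is even, and then treat $q=8$ separately.

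\emph{The orbit structure and (i).} Since $A_4$ has no subgroup of index $2$, every $G_0$-orbit on $\mathcal{P}$ has length $1$, $3$ or $4$, so the orbit-length pattern is $1+1+1+1+1$, $1+1+3$ or $1+4$. In the first two cases $G_0$, or else its normal Klein four-subgroup $V$, fixes five of the points and hence a frame (any four points in general position), so it is contained in the frame stabiliser $\Aut(\mathbb F_q)$; this is cyclic, which is impossible as $V\cong C_2\times C_2$. Hence the pattern is $1+4$: $G_0$ fixes one point, say $P_0$, and acts transitively on $\{P_1,\dots,P_4\}$ with point stabilisers of order $3$. A transitive action of $A_4$ of degree $4$ has self-normalising point stabilisers and is therefore the natural action, in particular $2$-transitive; this is (i). Along the way we see that the three involutions of $V$ induce on $\{P_1,\dots,P_4\}$ the three double transpositions $(P_1P_2)(P_3P_4)$, $(P_1P_3)(P_2P_4)$, $(P_1P_4)(P_2P_3)$.

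\emph{$q$ is even.} Write $D_1=P_1P_2\cap P_3P_4$, $D_2=P_1P_3\cap P_2P_4$, $D_3=P_1P_4\cap P_2P_3$ for the diagonal points of the quadrangle $\{P_1,\dots,P_4\}$; they are pairwise distinct and distinct from $P_0$ since $\mathcal{P}$ is in general position. Each involution of $V$ fixes $P_0$ and each $D_i$ (it preserves the unordered pair of lines meeting in $D_i$), so $V$ fixes at least four points of $\PG(2,q)$. Suppose now $q$ is odd. In odd characteristic every involutory collineation of a projective plane over a field is a homology (fixed locus a line together with one further point) or a Baer involution (fixed locus a Baer subplane), and as the three involutions of $V$ are conjugate in $G_0\cong A_4$ they are all of the same kind. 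They cannot all be Baer: the product of two of them would be linear, because projection to the cyclic group $\Aut(\mathbb F_q)$ is a homomorphism whose only involution squares to $1$, yet this product is the third, Baer, involution. So all three are homologies. For two distinct commuting involutory homologies the centre of each lies on the axis of the other, and their product is again an involutory homology; since a homology's centre never lies on its own axis, it follows that the three centres are distinct, the three axes are the sides of the triangle on those three centres, and the common fixed locus of the three homologies is exactly the set of the three centres. Then $V$ fixes only three points, a contradiction. Hence $q$ is even.

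\emph{The case $q=8$, and (ii).} Let $H$ be the stabiliser of $\K$ and $\mathcal{P}$ in the full collineation group of $\PG(2,8)$; then $G_0\le H\le \mathsf{P{\Gamma}O}(3,8)\cong\PGammaL$, which has order $1512$. As $5\nmid 1512$, $H$ is not transitive on $\mathcal{P}$, hence (containing $G_0$) fixes $P_0$ and preserves $\{P_1,\dots,P_4\}$. The kernel of the action of $H$ on $\{P_1,\dots,P_4\}$ fixes these and $P_0$, i.e.\ a frame, so it lies in $\Aut(\FF)\cong C_3$; but a nontrivial element there is, up to a scalar and conjugacy, the Frobenius, whose fixed locus is the Fano subplane $\PG(2,2)$, and seven points of $\PG(2,2)$ cannot contain five in general position (a two-point set is never a blocking set). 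So $H$ embeds into $S_4$. If its image were $S_4$, some involution of $H$ would fix $P_0$ and two of the $P_i$ while swapping the other two; but in characteristic $2$ there is no field automorphism of order $2$, and every linear involution of $\PG(2,8)$ is an elation, whose fixed locus is a single line, forcing three of the five points of $\mathcal{P}$ to be collinear — impossible. Hence the image is $A_4$ and $H=G_0$, which is (ii). The crux of the whole argument is the second step: it requires marrying the group theory of $A_4$ — conjugacy of the three involutions and the structure of $V$ — with the geometry of involutory collineations in odd characteristic, the delicate point being to pin down the configuration of three pairwise commuting involutory homologies and conclude that their common fixed locus collapses to three points; the first and third steps are then bookkeeping with complete quadrangles, group orders, and the classification of involutory collineations in characteristic $2$.
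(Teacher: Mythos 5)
Your proof is correct, but it reaches the conclusion --- in particular the key fact that $q$ must be even --- by a genuinely different route from the paper. For (i) you both arrive at the $1+4$ orbit pattern, though you derive it from the orbit lengths available to $A_4$ together with the cyclicity of a frame stabiliser, while the paper starts from the normal Klein subgroup $T=G_0'$ lying in $\mathsf{PGO}(3,q)$ and its unique fixed point. The real divergence is in excluding odd $q$: the paper identifies the stabiliser of $P_0$ in $\mathsf{P{\Gamma}O}(3,q)$ as $D_{2(q\pm1)}\rtimes C_e$ and observes that its unique central involution lies in every Klein four-subgroup, so no $A_4$ can sit inside it; you instead ignore the conic entirely and argue synthetically that $V$ fixes $P_0$ together with the three diagonal points of the quadrangle $\{P_1,\dots,P_4\}$, whereas three pairwise commuting involutory homologies (the Baer case being correctly excluded by the parity argument in $\Aut(\mathbb{F}_q)$) have exactly three common fixed points. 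Your argument is more elementary and in fact proves something slightly stronger --- any five points in general position admitting an $A_4$ of collineations with the $1+4$ pattern force even characteristic, with no reference to $\K$ --- at the cost of invoking the classification of involutory collineations of finite planes; the paper's is shorter but leans on the known subgroup structure of $\mathsf{P{\Gamma}L}(2,q)$. For (ii) the two arguments are closer in spirit (both reduce to the fact that linear involutions in characteristic $2$ are elations), but again differ in execution: the paper shows that the full order-$24$ stabiliser of $P_0$ and $\K$ cannot preserve $\mathcal{P}$ because its elation subgroup has orbits of length $8$, while you show that the stabiliser of $\K$ and $\mathcal{P}$ embeds faithfully into $S_4$ and cannot surject onto $S_4$. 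Both versions are sound.
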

\begin{proof}
(i) The Sylow $2$-subgroup $T=G_0'$ of $G_0$ is normal in $G_0$, and $T\leq \mathsf{PGO}(3,q)$. Hence, $T$ acts faithfully on $\mathcal{P}$, with a unique fixed point, say, $P_0$. As $T$ is normal in $G_0$, $G_0$ fixes $P_0$. Moreover, $T$ acts regularly and $G_0$ acts $2$-transitively on $\{P_1,\ldots,P_4\}$. Let $H$ be the stabilizer of $P_0$ in $\mathsf{P{\Gamma}O}(3,q)$. If $q$ is odd then either $H\cong D_{2(q+1)} \rtimes C_e$ (if no tangent of $\mathcal{K}$ is incident with $P$), or $H\cong D_{2(q-1)} \rtimes C_e$ (if $2$ tangents are incident with $P$). In both cases, the dihedral subgroup $D_{2(q\pm 1)}$ contains a unique central involution $\alpha$, which is contained in each elementary abelian subgroup of order $4$. Hence, no subgroup of $H$ of order $12$ can be isomorphic to $A_4$, a contradiction. 

(ii) Assume $q=8$. Then $H\cong \mathbb{F}_8^+\rtimes C_3$ has order $24$ and we have to show that $H$ does not leave $\mathcal{P}$ invariant. Let $t_0$ be the tangent line through $P_0$ to $\K$. The elementary abelian part $A$ of $H$ consists of elations with respect to $t_0$, that is, for any $2$-element $\alpha \in A$, the set of fixed point of $\alpha$ is $t_0$. As for $i\geq 1$, $P_i\not\in t_0$, and the $A$-orbit of $P_i$ has length $8$. 
\end{proof}

\begin{definition}
The \textit{fundamental pentagon} $\mathcal{F}$ of $\PG(2,8)$ is the set of points
 $A(1,1,0)$, $C_1(0,1,1)$ and
\[C_2=(\gamma,\gamma^6,1), \quad C_3=(\gamma^2,\gamma^5,1), \quad C_4=(\gamma^4,\gamma^3,1).\]
\end{definition}
\begin{lemma}\label{lm:fundamental}
The fundamental pentagon $\mathcal{F}$ is an external pentagon with respect to the conic $\K:X^2+YZ=0$. Moreover, $\mathcal{F}$ is of $A_4$ type with collineation group $G_0=\langle \Gamma, \Phi \rangle$.
\end{lemma}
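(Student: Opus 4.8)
\emph{Overall strategy.} I would first pin down the $G_0$-invariance of $\mathcal{F}$, and then read off both the general-position and the externality conditions by an orbit argument, so that each of them reduces to two short scalar computations in $\FF$. The first move is a uniform description of the points. Set $K=\{c\in\FF\mid \tr(c)=0\}=\{0,\gamma,\gamma^2,\gamma^4\}$, so that $\Gamma=\{\tau_c\mid c\in K\}$. A direct check identifies $C_1,C_2,C_3,C_4$ with the points $C_c:=(c,1+c^2,1)$, with $c=0,\gamma,\gamma^2,\gamma^4$ yielding $C_1,C_2,C_3,C_4$ respectively (e.g.\ $1+\gamma^2=\gamma^6$, $1+\gamma^4=\gamma^5$, $1+\gamma^8=\gamma^3$). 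Since $\tau_a(A)=A=\Phi(A)$ and
\[\tau_a(C_c)=\bigl(c+a,\,1+(c+a)^2,\,1\bigr)=C_{c+a},\qquad \Phi(C_c)=(c^2,1+c^4,1)=C_{c^2},\]
the group $G_0=\langle\Gamma,\Phi\rangle$ stabilises $\mathcal{F}$, fixes $A$, and induces on $\{C_c\mid c\in K\}$ the natural action of $A_4$ on the four points of $K\cong\mathbb{F}_2^2$: $\Gamma$ acts regularly by translations and $\Phi$ as the order-$3$ automorphism $c\mapsto c^2$, fixing $0$ and cycling $\gamma,\gamma^2,\gamma^4$. Hence $G_0$ is $2$-transitive on $\{C_1,\dots,C_4\}$; together with Lemma \ref{lm:Gamma}(v) this gives $G_0\cong A_4$, and by Lemma \ref{lm:Gamma}(ii) $G_0$ preserves $\K$. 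So once $\mathcal{F}$ is known to be an external pentagon, it will automatically be of type $A_4$ with group $G_0$.

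\emph{General position.} Since $G_0$ fixes $A$ and is $2$-transitive on the other four points, it has exactly two orbits on the $\binom{5}{3}=10$ point-triples of $\mathcal{F}$: the six triples containing $A$, represented by $\{A,C_1,C_2\}$, and the four triples inside $\{C_1,\dots,C_4\}$, represented by $\{C_1,C_2,C_3\}$. I would therefore only compute the two corresponding $3\times3$ determinants over $\FF$ and check that they are nonzero, concluding that no three points of $\mathcal{F}$ are collinear.

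\emph{External lines and conclusion.} By the same count, $G_0$ has two orbits on the ten lines $P_iP_j$, namely the four lines $AC_c$ and the six lines $C_cC_d$; as $G_0$ preserves $\K$ it permutes the external lines among themselves, so it is enough to test one representative of each orbit. A quick parametrisation gives
\[AC_c:\ Y=X+(1+c+c^2)Z,\qquad C_cC_d:\ Y=(c+d)X+(1+cd)Z,\]
both of the form $Y=mX+bZ$ with $m\neq0$ (for the second, $c\neq d$), so Lemma \ref{lm:Gamma}(iii) applies: for $AC_c$ one has $\tr(b/m^2)=\tr(1+c+c^2)=\tr(1)=1$ since $\tr(c)=\tr(c^2)=0$, and for $C_1C_2=C_0C_\gamma$ one gets $b/m^2=1/\gamma^2=\gamma^5$ with $\tr(\gamma^5)=1$. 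Thus every joining line is external, so $\mathcal{F}$ is an external pentagon, and by the first paragraph it is of type $A_4$ with collineation group $G_0=\langle\Gamma,\Phi\rangle$.

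I do not expect a genuine obstacle here: the one idea that keeps the argument short rather than a ten-fold case check is the uniform description $C_c=(c,1+c^2,1)$ over the trace-zero set $K$, which exhibits the $G_0$-action on $\mathcal{F}$ as the standard $A_4$-action on $\mathbb{F}_2^2$ together with a fixed point, and thereby collapses both the collinearity and the externality verifications to two scalar evaluations apiece.
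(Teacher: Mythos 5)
Your proposal is correct and follows essentially the same route as the paper: establish that $G_0=\langle\Gamma,\Phi\rangle$ fixes $A$ and acts as $A_4$ on $\{C_1,\dots,C_4\}$, then verify externality on one representative per line-orbit ($AC_1:Y=X+Z$ and $C_1C_2:Y=\gamma X+Z$) via the trace criterion of Lemma \ref{lm:Gamma}(iii). Your uniform parametrisation $C_c=(c,1+c^2,1)$ over the trace-zero set and the explicit general-position check are slightly more detailed than the paper's ``checked by calculations,'' but the underlying argument is the same.
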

\begin{proof}
The following facts can be checked by calculations: $A$ is fixed by $G_0$. $\Phi$ fixes $C_1$ and permutes $C_1,C_2,C_3$. The $\Gamma$-orbit of $C_1$ is $\{C_1,\ldots,C_4\}$. The lines $AC_1:Y=X+Z$ and $C_1C_2:Y=\gamma X+Z$ are external.
\end{proof}
\begin{lemma} \label{lm:penta_unique}
Let $\K$ be a conic in $\PG(2,8)$ and $\mathcal{P}$ an external pentagon of type $A_4$ with respect to $\K$. The projective coordinate frame can be chosen such that $\K$ has equation $X^2+YZ=0$ and $\mathcal{P}$ is the fundamental pentagon.
\end{lemma}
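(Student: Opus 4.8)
The plan is to normalize the pair $(\K,\mathcal{P})$ in three successive stages: first the conic, then the point of $\mathcal{P}$ fixed by $G_0$, and finally the four remaining points, at each step using a collineation preserving what has already been normalized. Since all nonsingular conics of $\PG(2,8)$ are projectively equivalent, I would first choose the frame so that $\K$ is the conic $X^2+YZ=0$; then $G_0\le\mathsf{P{\Gamma}O}(3,8)\cong\PGammaL$. By the lemma above on external pentagons of type $A_4$, $G_0$ fixes a unique point $P_0$ of $\mathcal{P}$, and (as its proof shows) the commutator subgroup $T=G_0'\cong V_4$ acts regularly on $\mathcal{P}\setminus\{P_0\}$; the same holds for the group $G_1:=\langle\Gamma,\Phi\rangle$, which is isomorphic to $A_4$ by Lemma~\ref{lm:Gamma}(v), and for the fundamental pentagon $\mathcal{F}$, with $G_1$ fixing $A=(1,1,0)$ and $\Gamma=G_1'$ acting regularly on $\{C_1,\dots,C_4\}$, by Lemma~\ref{lm:fundamental}.

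Second, I would use the classical fact that $\mathsf{P{\Gamma}O}(3,8)$ acts transitively on the $63$ external points of $\K$ --- with respect to a conic in $\PG(2,q)$, $q$ even, the collineation stabilizer has exactly the three point-orbits $\K$, $\{N\}$ and the set of external points, cf.~\cite{MR0333959}. Thus the stabilizer $H$ of $A$ in $\mathsf{P{\Gamma}O}(3,8)$ has order $1512/63=24$, and, replacing $\mathcal{P}$ by its image under a suitable element of $\mathsf{P{\Gamma}O}(3,8)$, I may assume $P_0=A$, so that $G_0\le H$. Since also $G_1\le H$, the group $H$ of order $24$ contains two subgroups isomorphic to $A_4$; but such a subgroup is unique: if $K\cong A_4\le H$, then the kernel of the action of $H$ on the two cosets of $K$ is a normal subgroup of $H$ contained in $K$ of index at most $2$, hence equal to $K$, so $K\trianglelefteq H$; two distinct ones would meet in a normal subgroup of $A_4$ of order at least $6$, which does not exist. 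Hence $G_0=G_1$, and in particular $T=\Gamma$.

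The third stage is the crux. We now know that $\mathcal{P}\setminus\{A\}$ and $\mathcal{F}\setminus\{A\}=\{C_1,\dots,C_4\}$ are both $\Phi$-invariant regular $\Gamma$-orbits of external points such that each of the four lines joining $A$ to a point of the orbit is external to $\K$. By Lemma~\ref{lm:Gamma}(iii) these external lines through $A$ are precisely the four lines $Y=X+bZ$ with $\tr(b)=1$; on them $\Gamma$ acts regularly (because $c\mapsto c^2+c$ maps $\{c:\tr(c)=0\}$ bijectively onto itself), while $\Phi$ acts by $b\mapsto b^2$ and fixes only $\ell_1:Y=X+Z$, which is the line $AC_1$. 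Since $Q\mapsto AQ$ is a $G_1$-equivariant bijection from such an orbit onto these four lines, the orbit meets $\ell_1$ in exactly one $\Phi$-fixed external point other than $A$; the line $\ell_1$ has exactly two such points, $C_1=(0,1,1)$ and $(1,0,1)$. Finally, $H\cong C_2\times A_4$ (for instance because $\mathsf{PSL}(2,8)$ has no subgroup of order $12$, so $H$ has a normal Sylow $2$-subgroup), and its central involution $z$ fixes $A$, commutes with $\Phi$, and therefore fixes $\ell_1$ (the unique external line through $A$ fixed by $\Phi$); as $z$ cannot fix $\ell_1$ pointwise --- otherwise it would be an elation with external axis, fixing no point of the $9$-element set $\K$ --- it fixes $A$ alone on $\ell_1$, hence interchanges $C_1$ and $(1,0,1)$ and thus the two candidate orbits. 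So the admissible orbit is unique up to $H$, and since $\mathcal{F}\setminus\{A\}$ is one of them, applying $z$ if necessary gives $\mathcal{P}=\mathcal{F}$.

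I expect the third stage to be the main obstacle: once the conic, the fixed point and the $A_4$-group have been pinned down, identifying the last four points still requires the explicit description of the $\Gamma$-action on the external points of $\K$ in coordinates, together with the verification that $z$ acts as stated --- equivalently, a direct check that only the two displayed quadruples can arise and that each of them, adjoined to $A$, really is an external pentagon. All the earlier steps rely only on the projective equivalence of conics, the orbit structure of a conic in even characteristic, and elementary finite group theory; no classification result is needed at this point.
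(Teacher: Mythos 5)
Your proof is correct and follows essentially the same route as the paper's: normalize the conic, move the $G_0$-fixed point to $A=(1,1,0)$, identify $G_0$ with $\langle\Gamma,\Phi\rangle$ as the unique $A_4$-subgroup of the order-$24$ stabilizer $H$, locate the $\Phi$-fixed point of $\mathcal{P}\setminus\{A\}$ on the unique $\Phi$-invariant external line through $A$ as one of $(0,1,1)$, $(1,0,1)$, and apply the central involution $\tau_1$ to normalize before taking the $\Gamma$-orbit. The only divergences are in the (equally valid) justifications of intermediate steps, e.g.\ your index-two argument for the uniqueness of the $A_4$ in $H$ versus the paper's decomposition of the elation group into $\Phi$-invariant subgroups.
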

\begin{proof}
We can assume the equation $\K:X^2+YZ=0$ and that $A(1,1,0)$ is the point of $\mathcal{P}$ which is fixed by $G_0$. Then $G_0$ is a subgroup of 
\[H=\{\tau_c \mid c \in \FF\} \rtimes \langle \Phi \rangle,\]
which is the stabilizer of $\K$ and $A$. The $2$-subgroup $\{\tau_c \mid c \in \FF\}$ has two $\Phi$-invariant, irreducible proper subgroups: $Z(H)=\langle \tau_1 \rangle$ and $\Gamma$. Hence, $\langle\Gamma,\Phi\rangle$ is the unique subgroup $H$ which is isomorphic to $A_4$, and $G_0=\langle\Gamma,\Phi\rangle$ follows. Let $C_1$ denote the point of $\mathcal{P}\setminus \{A\}$ that is fixed by $\Phi$. As $AC_1$ is an external line, $C_1$ must have coordinates $(x,x+b,1)$ with $x,b\in \mathbb{F}_2$ and $\tr(b)=1$. This means that either $C_1=(0,1,1)$ or $C_1=(1,0,1)$. Applying $\tau_1$ to $\mathcal{P}$, we can assume $C_1=(0,1,1)$. Straightforward computation shows that $\{C_1,\ldots,C_4\}$ is the $\Gamma$-orbit of $C_1$, and $\{A,C_1,\ldots,C_4\}$ is indeed the funtamental pentagon.
\end{proof}
\begin{remark} \label{rem:126}
Lemma \ref{lm:Gamma}(ii) and Lemma \ref{lm:penta_unique} imply that with fixed conic $\K$ of $\PG(2,8)$, the number of $A_4$-type external pentagons is $|\mathsf{P{\Gamma}O}(3,8):G_0|=126$. 
\end{remark}

For the rest of this section, we use the notation of Lemma \ref{lm:Gamma} for $\K$, $\Gamma$, $G_0$, and $\tau_c$. 
\begin{proposition} \label{pr:penta}
Let $\mathcal{F}=\{A,C_1,\ldots,C_4\}$ be the fundamental pentagon in $\PG(2,8)$. For any even permutation $ijk\ell$ of $\{1,2,3,4\}$, let $d_{ijk\ell}$ denote the line connecting the points $AC_i\cap C_jC_\ell$ and $AC_j\cap C_kC_\ell$. The following hold:
\begin{enumerate}[(i)]
\item $G_0$ permutes the lines $d_{ijk\ell}$ regularly. In particular, the lines $d_{ijk\ell}$ are distinct. 
\item The lines $d_{ijk\ell}$ are external to $\K$.
\item For any coset $\Gamma g$ of $\Gamma$, the four lines $d_\pi$, $\pi \in \Gamma g$, share a common point at infinity $Z=0$. 
\item The lines $AC_4$, $d_{1234}$ and $d_{3241}$ are concurrent.
\end{enumerate}
\end{proposition}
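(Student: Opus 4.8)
\emph{Part (i).} I would derive (i), (ii) and (iii) from the $A_4$-symmetry of the configuration, and treat (iv) by a direct coordinate computation. The key point for (i) is that the assignment $ijk\ell\mapsto d_{ijk\ell}$ is $G_0$-equivariant: by Lemma~\ref{lm:fundamental}, $G_0$ fixes $A$ and induces on $\{C_1,C_2,C_3,C_4\}$ the full group $A_4$ of even permutations, and if $g\in G_0$ induces the permutation $\sigma$, then $g$ carries $AC_i$ to $AC_{\sigma(i)}$ and $C_jC_\ell$ to $C_{\sigma(j)}C_{\sigma(\ell)}$, hence $g(d_{ijk\ell})=d_{\sigma(i)\sigma(j)\sigma(k)\sigma(\ell)}$. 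Writing each permutation as the tuple of its values and identifying $G_0$ with $A_4$, this says that $G_0$ acts on the twelve symbols $d_\pi$ ($\pi\in A_4$) by left translation, which is a regular action. So (i) follows as soon as the twelve lines $d_\pi$ are shown to be pairwise distinct, i.e.\ as soon as the $G_0$-orbit of $d_{1234}$ has length $12$. For that I would compute $d_{1234}$ directly: intersecting $AC_1:X+Y+Z=0$ with $C_2C_4$ and $AC_2$ with $C_3C_4$ yields two distinct points whose join is the line $X+\gamma Y+\gamma^4Z=0$, and computing the images of this line under the generators $\tau_c$ ($c\in\Gamma$) and $\Phi$ exhibits twelve distinct lines.

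\emph{Part (ii).} By Lemma~\ref{lm:Gamma}(ii), $G_0$ stabilizes $\K$ and hence permutes the external lines of $\K$. By (i) the lines $d_\pi$ form a single $G_0$-orbit, so it suffices to check $d_{1234}$. Rewriting $X+\gamma Y+\gamma^4Z=0$ as $Y=\gamma^6X+\gamma^3Z$ gives $b/m^2=\gamma^{5}$ with $\tr(\gamma^5)=1$, so $d_{1234}$ --- and therefore every $d_\pi$ --- is external by Lemma~\ref{lm:Gamma}(iii).

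\emph{Part (iii).} By Lemma~\ref{lm:Gamma}(iv)--(v), $\Gamma$ is the normal Sylow $2$-subgroup of $G_0\cong A_4$, so $G_0/\Gamma\cong C_3$ and there are exactly three cosets $\Gamma g$, each of size $4$. Under the identification of part (i), $\Gamma$ acts on the index set $A_4$ by left translation, and since $|\Gamma|=4$ its orbits are precisely these three cosets; hence, for a fixed coset $\Gamma g$, the group $\Gamma$ permutes the four lines $\{d_\pi:\pi\in\Gamma g\}$ transitively. Each $\tau_c$ is, by construction, an elation with axis $\ell_\infty:Z=0$, so $\Gamma$ fixes $\ell_\infty$ pointwise. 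Since $\ell_\infty$ meets $\K$ only in $(0,1,0)$, it is tangent to $\K$; and since each $d_\pi$ is external by (ii), we have $d_\pi\neq\ell_\infty$, so $d_\pi$ meets $\ell_\infty$ in a single point $Q_\pi$. If $\tau\in\Gamma$ sends $d_\pi$ to $d_{\pi'}$, then $Q_{\pi'}=\tau(Q_\pi)=Q_\pi$ because $Q_\pi$ lies on the pointwise-fixed axis; by transitivity the four points $Q_\pi$ coincide. (Running the computation of part (i), the common point for the coset containing $d_{1234}$ is $(\gamma,1,0)$.)

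\emph{Part (iv).} Here the $A_4$-symmetry gives nothing: $AC_4$ passes through $A$ while $d_{1234}$ and $d_{3241}$ do not, so the three lines are not a $G_0$-orbit, and I expect this to be the main obstacle. I would settle it by a bounded computation in $\PG(2,8)$. Part (i) gives $d_{1234}:X+\gamma Y+\gamma^4Z=0$; computing $d_{3241}$ in the same way (intersecting $AC_3$ with $C_1C_2$, and $AC_2$ with $C_1C_4$) gives $\gamma^3X+Y+\gamma^4Z=0$, and $AC_4$ is $X+Y+\gamma^6Z=0$. It then remains to verify that the $3\times3$ determinant of these three coefficient triples vanishes --- equivalently, that all three lines pass through the point $(\gamma^2,1,1)$ --- a routine check using the powers of $\gamma$ and the trace function recorded after Lemma~\ref{lm:Gamma}.
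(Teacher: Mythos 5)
Your proposal is correct and follows essentially the same route as the paper: exploit the $G_0$-equivariance of $\pi\mapsto d_\pi$, compute $d_{1234}$ explicitly (your $X+\gamma Y+\gamma^4Z=0$ is the paper's $Y=\gamma^6X+\gamma^3Z$), check externality via $\tr(\gamma^5)=1$, use that the elations in $\Gamma$ fix $\ell_\infty$ pointwise for (iii), and settle (iv) by a determinant/common-point computation with the same three coefficient triples. The only (immaterial) difference is how distinctness of the twelve lines is certified: the paper shows the stabilizer of $d_{1234}$ in $G_0$ is trivial (its point at infinity $(1,\gamma^6,0)$ is moved by the order-$3$ elements, and $\tr(\gamma^6)=1$ excludes a nontrivial stabilizer in $\Gamma$ by Lemma~\ref{lm:Gamma}(i)), whereas you propose to enumerate the orbit directly.
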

\begin{proof}
Obviously, $G_0$ acts on the lines $d_{ijk\ell}$. We have
\[AC_1\cap C_2C_4 =(\gamma^6,\gamma^2,1), \quad AC_2\cap C_3C_4 =(1,\gamma^4,1),\]
with connecting line $d_{1234}:Y=\gamma^6 X+\gamma^3 Z$. The intersection $d_{1234}\cap \ell_\infty=(1,\gamma^6,0)$ is not fixed by any element of order $3$ of $G$. Thus, the stabilizer of $d_{1234}$ in $G$ is contained in $\Gamma$. Since $\tr(\gamma^6)=1$, the stabilizer of $d_{1234}$ in $\Gamma$ is trivial by Lemma \ref{lm:Gamma}(i). This proves (i), and also (ii), since $d_{1234}$ is an external line. (iii) follows from the fact that $\Gamma$ fixes the points at infinity. Computing the equations and the determinant
\[\det\begin{bmatrix}
1&1&\gamma^6 \\ \gamma^6&1&\gamma^3 \\ \gamma^3&1&\gamma^4
\end{bmatrix} =0,\]
we obtain (iv). 
\end{proof}

\section{Super O'Nan configurations in $\RRR$} \label{sec:superonan}

In a $2$-design, an \textit{O'Nan (or Pasch) configuration} consists of four pairwise intersecting blocks, no three of which pass through the same point. Brouwer \cite{MR655065} observed that in $\RRR$, each O'Nan configuration is contained in a \textit{super O'Nan configuration,} that is, in a set of five pairwise intersecting blocks in general position. In this section, we collect some facts on super O'Nan configurations of $\RRR$. 

\begin{lemma}[{\cite{MR655065}}] \label{lm:nrSOCs}
The number of super O'Nan configurations in $\RRR$ is $126$. \qed
\end{lemma}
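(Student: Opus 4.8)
The plan is to translate super O'Nan configurations into the dual model of $\RRR$ and then reduce the count to Remark~\ref{rem:126}.

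\emph{Geometric translation.} Using the dual embedding of Section~\ref{sec:prelim}, identify $\RRR$ with the unital $\mathcal{U}$ whose points are the $28$ external lines and whose blocks are the $63$ external points of a conic $\K$ in $\PG(2,8)$, with incidence inherited from $\PG(2,8)$. A block is then an external point $P$, and the points of $\mathcal{U}$ on it are the four external lines through $P$; hence two blocks $P,P'$ meet in a point of $\mathcal{U}$ exactly when $PP'$ is external, and three blocks are concurrent in $\mathcal{U}$ exactly when the corresponding external points are collinear. So a super O'Nan configuration --- five pairwise intersecting blocks in general position --- corresponds precisely to a set of five external points $\{P_0,\dots,P_4\}$ with all ten joins $P_iP_j$ external to $\K$ and no three $P_i$ collinear, i.e.\ to an external pentagon with respect to $\K$. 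Since $\mathsf{PGL}(3,8)$ is transitive on conics, the number of super O'Nan configurations of $\RRR$ equals the number of external pentagons of $\PG(2,8)$ with respect to one fixed conic $\K$.

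\emph{A fibration over the points of $\K$.} For an external point $P$ put $\kappa(P)=PN\cap\K$, where $N$ is the nucleus; equivalently, $\kappa(P)$ is the contact point of the unique tangent of $\K$ through $P$. Then $\kappa$ is $\mathsf{P{\Gamma}O}(3,8)$-equivariant and seven-to-one (the tangent at a point of $\K$ carries exactly $7$ external points), and if $P,P'$ lie on a common external line then $PP'$ is not a tangent, so $\kappa(P)\ne\kappa(P')$. Hence an external pentagon $\{P_0,\dots,P_4\}$ determines a $5$-element subset $\{\kappa(P_0),\dots,\kappa(P_4)\}$ of the nine points of $\K$, and this assignment is $\mathsf{P{\Gamma}O}(3,8)$-equivariant. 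Now $\mathsf{P{\Gamma}O}(3,8)$ acts on the points of $\K$ as $\PGammaL$ on $\PG(1,8)$, a $3$-transitive action under which all $5$-element subsets are equivalent (over $\mathbb{F}_8$ there are no harmonic or equianharmonic quadruples, so the stabiliser of a quadruple is the Vierergruppe); thus $\mathsf{P{\Gamma}O}(3,8)$ is transitive on the $\binom{9}{5}=126$ five-subsets of $\K$. Together with the existence of at least one external pentagon (the fundamental pentagon $\mathcal{F}$, Lemma~\ref{lm:fundamental}), equivariance forces the induced map $\{\text{external pentagons}\}\to\{\text{$5$-subsets of }\K\}$ to be surjective with all fibres of a common size $s$. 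Therefore there are exactly $126\,s$ external pentagons, and it remains to prove $s=1$.

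\emph{The main obstacle.} By Lemma~\ref{lm:penta_unique} the external pentagons of type $A_4$ with respect to $\K$ form a single $\mathsf{P{\Gamma}O}(3,8)$-orbit, of size $126$ by Remark~\ref{rem:126}; running the argument of the previous paragraph on this orbit shows that $\kappa$ maps it bijectively onto the $126$ five-subsets of $\K$, so every fibre contains exactly one pentagon of type $A_4$. Consequently $s=1$ is equivalent to the statement that \emph{every} external pentagon is of type $A_4$ --- equivalently, that the five vertices of an external pentagon are recovered from the five tangents carrying them. This is the crux and the main difficulty. It is a finite verification: fixing, via the group, a convenient $5$-subset of $\K$ --- for instance the one attached to $\mathcal{F}$ --- one checks in $\PG(2,8)$-coordinates that no external pentagon other than $\mathcal{F}$ lies over it, Lemma~\ref{lm:Gamma} and the explicit coordinates of $\mathcal{F}$ being designed for exactly such computations. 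Carrying this out --- equivalently, a direct enumeration of external pentagons using the $2$-transitivity of $\mathsf{P{\Gamma}O}(3,8)$ on the $28$ external lines to fix two of the vertices first --- completes the proof, and is the content of Brouwer's computation \cite{MR655065}.
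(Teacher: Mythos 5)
Your reduction is clean and, as far as it goes, correct: the dual translation of super O'Nan configurations into external pentagons, the seven-to-one tangent map $\kappa$ and its injectivity on the vertices of a pentagon, the transitivity of $\mathsf{P{\Gamma}O}(3,8)\cong\PGammaL$ on $5$-subsets of $\K$ (your cross-ratio argument in characteristic $2$ is sound, since $\mathbb{F}_8$ contains no primitive cube roots of unity), and the conclusion that the pentagon count is $126\,s$ with every fibre containing exactly one $A_4$-type pentagon. But the proof has a genuine gap exactly where you say the ``crux'' is: you never establish $s=1$. You reduce the lemma to the claim that every external pentagon is of type $A_4$, declare it ``a finite verification,'' and then attribute that verification to Brouwer's computation --- which is the very reference the lemma cites. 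No independent argument or explicit computation is supplied, so nothing beyond the citation has actually been proved.

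The gap matters for the paper's logical architecture as well. In the paper, the implication runs in the opposite direction: Lemma~\ref{lm:autSOC} \emph{deduces} ``every external pentagon is of type $A_4$'' from the conjunction of Remark~\ref{rem:126} ($126$ pentagons of type $A_4$) and Lemma~\ref{lm:nrSOCs} ($126$ super O'Nan configurations, imported from \cite{MR655065}). Your route needs that statement as an \emph{input}, so invoking it --- or invoking Brouwer's count, which is equivalent to it given your own fibration argument --- is circular. To close the gap you would have to actually perform the finite check you describe: fix the $5$-subset of $\K$ attached to $\mathcal{F}$, enumerate the choices of one external point on each of the five corresponding tangents (at most $7^5$ configurations, cut down by the order-$12$ stabiliser and by checking the ten joins incrementally), and verify that $\mathcal{F}$ is the only external pentagon in that fibre. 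Until that enumeration is exhibited, the proposal is a correct reformulation of the lemma, not a proof of it.
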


For the rest of this section, we fix a dual embedding $\varphi^*$ of $\RRR$ in $\PG(2,8)$ with respect to the conic $\K:X^2+YZ=0$. Notice that the blocks $a,b$ of $\RRR$ intersect if and only if the points $\varphi^*(a),\varphi^*(b)$ determine an external line of $\K$. This implies that $\{b_0,\ldots,b_4\}$ is a super O'Nan configuration of $\RRR$ if and only if $\{\varphi^*(b_0),\ldots,\varphi^*(b_4)\}$ is an external pentagon with respect to $\K$. 
\begin{lemma} \label{lm:autSOC}
$\Ree(3)$ acts transitively on the set of super O'Nan configurations of $\RRR$. The stabilizer of a super O'Nan configuration is isomorphic to $A_4$. It fixes one of the blocks $b_i$ and acts $2$-transitively on the remaining four. 
\end{lemma}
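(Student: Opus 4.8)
The plan is to transport the statement to $\PG(2,8)$ via the dual embedding $\varphi^*$ fixed above, and then to deduce the transitivity by playing off two independent counts of the same objects against each other.

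\emph{First}, I would record the translation. Since $\varphi^*\colon\RRR\to\mathcal{U}$ is an isomorphism of designs, it restricts to a bijection between the $5$-element sets of blocks of $\RRR$ and the $5$-element sets of external points of $\K$, and, as already observed, it carries the super O'Nan configurations exactly to the external pentagons with respect to $\K$. Moreover the collineation group $G=\mathsf{P{\Gamma}O}(3,8)$ preserving $\K$ induces on $\mathcal{U}$, hence through $\varphi^*$ on $\RRR$, a subgroup of $\Aut(\RRR)$ isomorphic to $\PGammaL\cong\Ree(3)$; since this subgroup already has order $1512=|\Ree(3)|$, it is the whole copy of $\Ree(3)$ inside $\Aut(\RRR)$. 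Hence it is enough to show that $G$ acts transitively on the external pentagons of $\K$, with point stabiliser isomorphic to $A_4$ fixing one vertex of the pentagon and acting $2$-transitively on the other four; conjugating back by $\varphi^*$ then gives the lemma.

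\emph{Next}, I would establish the transitivity by counting. On one side, Lemma \ref{lm:nrSOCs} gives exactly $126$ super O'Nan configurations in $\RRR$, hence, by the translation above, exactly $126$ external pentagons with respect to $\K$. On the other side, Lemma \ref{lm:penta_unique} tells us that every external pentagon \emph{of type $A_4$} is $G$-equivalent to the fundamental pentagon $\mathcal{F}$, and by the earlier lemma characterising type-$A_4$ pentagons the full stabiliser of $\mathcal{F}$ in the collineation group of $\PG(2,8)$ is $G_0=\langle\Gamma,\Phi\rangle\cong A_4$; therefore the type-$A_4$ external pentagons form a single $G$-orbit of length $|G:G_0|=1512/12=126$, which is exactly Remark \ref{rem:126}. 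Comparing the two numbers forces \emph{every} external pentagon to be of type $A_4$. In particular $G$ is transitive on external pentagons, and the stabiliser of each one is a conjugate of $G_0\cong A_4$.

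\emph{Finally}, I would describe the action on the five vertices, which by transitivity reduces to analysing $G_0$ on $\mathcal{F}=\{A,C_1,\ldots,C_4\}$. By Lemma \ref{lm:fundamental} the point $A$ is fixed by $G_0$, the subgroup $\Gamma\cong C_2\times C_2$ acts regularly on $\{C_1,C_2,C_3,C_4\}$, and $\Phi$ fixes $C_1$ and induces a $3$-cycle on $C_2,C_3,C_4$. Consequently the stabiliser of $C_1$ in $G_0$ is $\langle\Phi\rangle$, of order $3$ and transitive on $\{C_2,C_3,C_4\}$, so $G_0$ acts on the four points $C_i$ as the natural (sharply) $2$-transitive degree-$4$ representation of $A_4$. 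Transporting everything back through $\varphi^*$ yields the claimed action of $\Ree(3)$ on the super O'Nan configurations of $\RRR$. The step I expect to be the genuine obstacle is the numerical coincidence in the middle: the transitivity hinges entirely on recognising that Brouwer's count $126$ from Lemma \ref{lm:nrSOCs} is the \emph{same} $126$ as the $G$-orbit length of Remark \ref{rem:126}, for it is exactly this that excludes external pentagons of any other projective type. Once that identification is made, the rest is bookkeeping with the explicit data already recorded in Lemmas \ref{lm:Gamma}, \ref{lm:fundamental} and \ref{lm:penta_unique}.
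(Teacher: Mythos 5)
Your proposal is correct and follows essentially the same route as the paper: transport via the dual embedding, compare Brouwer's count of $126$ super O'Nan configurations with the orbit length $|{\mathsf{P{\Gamma}O}(3,8)}:G_0|=126$ of $A_4$-type pentagons from Remark \ref{rem:126} to conclude every external pentagon is of type $A_4$, then read off the stabiliser action from the fundamental pentagon via Lemma \ref{lm:penta_unique}. You simply spell out the bookkeeping (the identification of the induced $\Ree(3)$ and the explicit action of $\Gamma$ and $\Phi$ on $\{C_1,\dots,C_4\}$) that the paper leaves implicit.
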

\begin{proof}
Remark \ref{rem:126} and Lemma \ref{lm:nrSOCs} imply that each external pentagon of $\PG(2,8)$ is of $A_4$ type. Our claim follows from Lemma \ref{lm:penta_unique}. 
\end{proof}

\begin{proposition} \label{pr:mainSOC}
Let $\mathcal{B}$ be a super O'Nan configuration of $\RRR$ with stabilizer subgroup $S\cong A_4$. We can label the blocks of $\mathcal{B}$ by $a,c_1,\ldots,c_4$ such that for any even permutation $ijk\ell$, the blocks $(a\cap c_i)(c_j\cap c_\ell)$ and $(a\cap c_j)(c_k\cap c_\ell)$ have a unique intersection $D_{ijk\ell}$. Moreover, the following hold:
\begin{enumerate}[(i)]
\item The points $D_{ijk\ell}$ are distinct. 
\item Let $T$ be the Sylow $2$-subgroup of $S$. For any coset $T g$ of $S$, the four points $D_\pi$, $\pi \in T g$, form a block. 
\item The points $a\cap c_4$, $D_{1234}$ and $D_{3241}$ are contained in a block.
\end{enumerate}
\end{proposition}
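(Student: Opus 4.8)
The plan is to transport everything to $\PG(2,8)$ through the fixed dual embedding $\varphi^*$ and then read the three assertions off Proposition \ref{pr:penta}. By Lemma \ref{lm:autSOC}, $\mathcal{B}$ corresponds under $\varphi^*$ to an external pentagon of type $A_4$, so by Lemma \ref{lm:penta_unique} we may choose the coordinate frame with $\K:X^2+YZ=0$ and $\varphi^*(\mathcal{B})$ equal to the fundamental pentagon $\mathcal{F}=\{A,C_1,\ldots,C_4\}$. Since $\varphi^*$ is equivariant for the automorphism groups, it carries the stabilizer $S$ onto the $A_4$-type stabilizer $G_0=\langle\Gamma,\Phi\rangle$ of $\mathcal{F}$ (Lemma \ref{lm:fundamental}) and the Sylow $2$-subgroup $T$ onto $\Gamma$. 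Finally, label the block of $\mathcal{B}$ sent to $A$ by $a$ and the one sent to $C_i$ by $c_i$.

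Next I would set up the dictionary imposed by duality: a point of $\RRR$ lying on blocks $b,b'$ corresponds to the external line joining $\varphi^*(b)$ and $\varphi^*(b')$, and a block through points $p,p'$ corresponds to the external point common to the two corresponding lines. Thus the block $(a\cap c_i)(c_j\cap c_\ell)$ corresponds to $AC_i\cap C_jC_\ell$ (the sides of $\mathcal{F}$ being external), two such blocks meet in a point exactly when the corresponding points of $\PG(2,8)$ are joined by an external line, and here their join is the line $d_{ijk\ell}$ of Proposition \ref{pr:penta}, which is external by \ref{pr:penta}(ii); moreover the two points in question are distinct, for otherwise that common point would lie on both $AC_i$ and $AC_j$, hence be $A$, forcing $A,C_j,C_\ell$ collinear against general position. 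Hence $D_{ijk\ell}$ is well defined and corresponds to $d_{ijk\ell}$. I would also record the elementary remark that every line through a point of the hyperoval $\mathcal{O}=\K\cup\{N\}$ meets $\mathcal{O}$ in a second point, so that a point lying on any external line is itself external, and therefore equals $\varphi^*(b)$ for a unique block $b$.

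With the dictionary in hand, (i) is literally Proposition \ref{pr:penta}(i). For (ii), \ref{pr:penta}(iii) says the four lines $d_\pi$, $\pi$ running over a coset of $\Gamma$, are concurrent; their common point is external by the remark above, hence equals $\varphi^*(b)$ for a block $b$, which then contains the four distinct points $D_\pi$, and since $|b|=4$ this forces $b=\{D_\pi:\pi\in Tg\}$. For (iii), \ref{pr:penta}(iv) says $AC_4$, $d_{1234}$ and $d_{3241}$ pass through a common external point $\varphi^*(b)$, so $b$ contains $a\cap c_4$, $D_{1234}$ and $D_{3241}$ (these being distinct, since $AC_4$ passes through $A$ while no $d_{ijk\ell}$ does, and $D_{1234}\neq D_{3241}$ by (i)). The only point that needs a little care is organizational: one must check that the indexing of the $D_{ijk\ell}$ matches that of the $d_{ijk\ell}$ equivariantly for $S$ and $G_0$, so that a coset $Tg$ of $T$ in $S$ on the design side corresponds to a coset $\Gamma g$ of $\Gamma$ on the plane side. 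This is immediate, because in both settings $A_4$ acts on the even permutations of $\{1,2,3,4\}$ by composition and $\varphi^*$ intertwines the two actions. I do not anticipate any genuine obstacle here, since all the geometric substance has already been isolated in Proposition \ref{pr:penta}.
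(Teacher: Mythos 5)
Your proposal is correct and follows exactly the route of the paper, whose own proof is the one-line citation of Lemma \ref{lm:autSOC}, Proposition \ref{pr:penta} and the dual embedding $\varphi^*$; you have simply written out the duality dictionary (blocks $\leftrightarrow$ external points, points $\leftrightarrow$ external lines, intersection of blocks $\leftrightarrow$ external joining line) and the well-definedness and equivariance checks that the paper leaves implicit. No gaps.
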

\begin{proof}
We use Lemma \ref{lm:autSOC}, Proposition \ref{pr:penta} and the dual embedding $\varphi^*$ of $\RRR$ in $\PG(2,8)$. 
\end{proof}

\section{Embeddings of $\RRR$} \label{sec:R3emb}

\begin{proof}[Proof of Theorem \ref{thm:R3_embedding}]
Let $\{a,c_1,\ldots,c_4\}$ be a super O'Nan configuration of $\RRR$, as given in Proposition \ref{pr:mainSOC}. Let us choose the projective coordinate frame of $\PG(2,F)$ such that $\varphi(c_1): X+Y+Z=0$, $\varphi(c_2): X=0$, $\varphi(c_3): Y=0$, $\varphi(c_4): Z=0$. There are elements $u,v\in F\setminus\{0,1\}$, $u\neq v$, such that $\varphi(a):uX+vY+Z=0$. We have
\begin{align*}
\varphi(D_{1234})&=(v^2-v,v-u,v^2-uv), & \varphi(D_{2143})&=(u-uv,u-1,v-u), \\ 
\varphi(D_{3412})&=(v,uv-u,-uv), & \varphi(D_{4321})&=(v-u,u^2-u,u-uv).
\end{align*}
By Proposition \ref{pr:mainSOC}(ii), these are collinear points, thus,
\[\det\begin{bmatrix}
v^2-v&v-u&v^2-uv \\ u-uv&u-1&v-u \\ v&uv-u&-uv
\end{bmatrix} = u(u-1)v(v-1)(v^2-uv+2u-3v)=0,\]
and
\[\det\begin{bmatrix}
v^2-v&v-u&v^2-uv \\ u-uv&u-1&v-u \\ v-u&u^2-u&u-uv
\end{bmatrix} = u(u-1)v(v-1)(uv-u^2+2u-3v+1)=0.\]
The difference of these two equations is
\[u(u-1)v(v-1)((u-v)^2-1)=0,\]
which implies $u=v\pm 1$. Substituting back, we obtain either $2v-2=0$ or $2=0$, which are not possible unless $\mathrm{char}(F)=2$. In this case, all equations so far reduce to $u+v+1=0$. Computing $\varphi(a\cap c_4)=(v,v+1,0)$ and
\[\varphi(D_{3241}) = (v+1,1,v^2),\]
we have
\[\det\begin{bmatrix}
v&v+1&0 \\ v^2+v&1&v \\ v+1&1&v^2
\end{bmatrix} = v(v+1)(v^3+v^2+1)=0.\]
This shows that $u,v\in \FF$, and for any even permutation $ijk\ell$, $\varphi(D_{ijk\ell})$ is contained in the subplane $\PG(2,8)$. Hence, at least $22$ points of $\varphi(\RRR)$ are contained in $\PG(2,8)$. If $Q$ is one of the remaining $6$ points, then there are at least two blocks through $Q$ with equation over $\FF$, and therefore $Q$ is in $\PG(2,8)$ as well. The computation shows that up to the action of the Frobenius map $\Phi$, the embedding $\varphi$ is uniquely determined by the images of the blocks $c_1,\ldots,c_4$. In particular, $\varphi$ must be an embedding with respect to a dual conic $\K^*$. All subplanes of order $8$, and all dual conics of a given subplane of order $8$ are projectively equivalent in $\PG(2,F)$. Hence, we obtain (ii). Montinaro's \cite[Theorem 5]{MR2368994} implies (iii).
\end{proof}

\begin{corollary} \label{coro:concurrent}
Let $F$ be a field and $\varphi:\RRR\to \PG(2,F)$ an embedding. Let $S=\{1,a_1,\ldots,a_7\}$ be a Sylow $2$-subgroup of $\Ree(3)$. Then the lines $\varphi(a_1), \ldots,\varphi(a_7)$ are concurrent. 
\end{corollary}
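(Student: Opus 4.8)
The plan is to use the admissibility of $\varphi$ to make $S$ act as a group of collineations of the ambient plane, and then to recognise that group as a group of \emph{concentric elations}. First recall that a Sylow $2$-subgroup of $\Ree(3)\cong\PGammaL\cong\mathsf{PSL}(2,8)\rtimes C_3$ has order $8$, is contained in $\mathsf{PSL}(2,8)$, and is elementary abelian; hence $S\cong C_2^3$ and $a_1,\dots,a_7$ are precisely its seven involutions. Since $S$ is abelian it centralises each $a_i$, so it stabilises (setwise) the block $\mathrm{Fix}_{\RRR}(a_i)$, which we continue to denote $a_i$ as in the statement. Two distinct involutions of $\Ree(3)$ have distinct fixed point sets, because the pointwise stabiliser of two points of $\RRR$ has order $2$; thus $\varphi(a_1),\dots,\varphi(a_7)$ are seven pairwise distinct lines. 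By Theorem~\ref{thm:R3_embedding}(i),(iii) the image of $\varphi$ lies in a subplane $\Sigma\cong\PG(2,8)$ and $\varphi$ is admissible; since $\varphi(\RRR)$ contains four points in general position (indeed it spans $\Sigma$), each automorphism of $\RRR$ induces a unique collineation of $\Sigma$, giving a faithful action of $\Ree(3)$ on $\Sigma$ for which $\varphi$ is equivariant. Writing $\bar S\cong C_2^3$ for the image of $S$, each of the lines $\varphi(a_1),\dots,\varphi(a_7)$ is fixed by $\bar S$.

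Next I would pin down the collineations in $\bar S$. As $\mathsf{P{\Gamma}L}(3,8)=\mathsf{PGL}(3,8)\rtimes C_3$ and $\bar S$ is a $2$-group, $\bar S\le\mathsf{PGL}(3,8)$. An involution of $\mathsf{PGL}(3,8)$ lifts, all elements of $\mathbb{F}_8$ being squares, to a matrix $g\in\mathsf{GL}(3,8)$ with $g^2=1$; in characteristic $2$ this forces $(g-1)^2=0$, so $g-1$ is a rank-one nilpotent and $g$ is a transvection. Hence every non-identity element of $\bar S$ is an elation, its centre lies on its axis, and the lines it fixes are exactly the lines of the pencil through its centre.

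The heart of the argument is then a one-line incidence observation. Suppose two distinct non-identity elements $\sigma,\sigma'$ of $\bar S$ had distinct centres $Q$ and $Q'$. Every line fixed by $\bar S$ is fixed by both $\sigma$ and $\sigma'$, hence passes through $Q$ and through $Q'$, and so equals the unique line $QQ'$; thus $\bar S$ would fix at most one line, contradicting that it fixes the seven distinct lines $\varphi(a_i)$. Therefore all non-identity elements of $\bar S$ share a common centre $Q_0$, whence every $\bar S$-invariant line, in particular each $\varphi(a_i)$, passes through $Q_0$; this is the asserted concurrency. I expect the steps needing the most care to be the book-keeping in the first paragraph (that admissibility, together with the image spanning $\Sigma$, really yields a \emph{faithful} collineation action of $\Ree(3)$ on the subplane) and the elementary fact that the $2$-elements of $\mathsf{PGL}(3,8)$ are elations; the geometric content is just that several common fixed lines force the elations in $\bar S$ to be concentric. (By duality one could equivalently argue in the model of Section~\ref{sec:prelim}, where the claim becomes the statement that $\varphi^{*}(a_1),\dots,\varphi^{*}(a_7)$ are the seven external points lying on a secant line of $\K$.)
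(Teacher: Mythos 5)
Your proof is correct, and it reaches the conclusion by a route that is related to but noticeably different from the paper's. The paper works entirely in the dual model of Section~\ref{sec:prelim}: by Theorem~\ref{thm:R3_embedding} the dual embedding lands in a subplane $\PG(2,8)$ equipped with the conic $\K$, each block $a$ corresponds to an external point $\varphi^*(a)$ and to an involution $\hat a\in\mathsf{P\Gamma O}(3,8)$, and one checks that $a_1,a_2$ commute iff $\hat a_1,\hat a_2$ fix the same point of $\K$ iff $\varphi^*(a_1)\varphi^*(a_2)$ is a tangent; the seven pairwise commuting involutions of $S$ then have their external points on one tangent line. You instead stay in the primal plane and avoid the conic altogether: admissibility gives a faithful $\bar S\cong C_2^3$ inside $\mathsf{PGL}(3,8)$, every involution there is an elation (your scaling argument using that $\mathbb{F}_8^*$ has odd order is fine), abelianness of $S$ makes each line $\varphi(a_i)$ invariant under all of $\bar S$, and seven common fixed lines force a common centre. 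This is a clean, somewhat more elementary argument given Theorem~\ref{thm:R3_embedding}; the paper's version is shorter only because the dual-conic machinery is already set up. Two small remarks: (a) your argument can be streamlined — a \emph{single} nontrivial elation $\bar a_1$ fixes all seven lines (again by abelianness), and the fixed lines of an elation are exactly the pencil through its centre, so concurrency follows without the two-centres pigeonhole; (b) in your closing parenthetical, the dual statement is that $\varphi^*(a_1),\dots,\varphi^*(a_7)$ are the seven external points on a \emph{tangent} of $\K$ (the axis of the common elation, through the nucleus), not on a secant.
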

\begin{proof}
Consider the dual embedding $\varphi^*:\RRR \to \PG(2,F)$. By Theorem \ref{thm:R3_embedding}, $\varphi^*(\RRR)$ is contained in a subplane $\PG(2,8)$. As before, we identify the blocks of $\RRR$ and the involutions of $\Ree(3)$. For a block $a$, $\varphi^*(a)$ is an external point of the conic $\mathcal{K}$. Moreover, $a$ determines a unique collineation $\hat{a}$ of order $2$. The following are equivalent:
\begin{enumerate}[(1)]
\item Two involutions $a_1,a_2\in \Ree(3)$ commute.
\item The involutions $\hat{a}_1, \hat{a}_2\in \mathsf{PGL}(3,F)$ commute.
\item The line $\varphi^*(a_1)\varphi^*(a_2)$ is tangent to $\mathcal{K}$. 
\item $\hat{a}_1$, $\hat{a}_2$ fix the same point of $\mathcal{K}$. 
\end{enumerate}
This implies that $\varphi^*(a_1), \ldots,\varphi^*(a_7)$ are contained in a tangent of $\mathcal{K}$, which is our claim in the dual setting. 
\end{proof}

\section{The nonexistence of embeddings of $\mathcal{R}(q)$, $q\geq 27$} \label{sec:Rqnonemb}

In this section, we write $q=3^{2n+1}$, and $G=\Ree(q)$. We have $2n+1=|\mathrm{Out}(\Ree(q))|$ and for any divisor $\alpha$ of $2n+1$ there is an outer automorphism $\psi_\alpha$ of $\Ree(q)$ of order $\alpha$. (See \cite[Lemma 6.2]{MR955589}.) Write $q_0=q^{1/\alpha}=3^{2n_0+1}$ and $G_0=C_{\Ree(q)}(\psi_\alpha)$. We have $G_0\cong \Ree(q_0)$. 

In order to be self-contained, we recall Kleidman's classification \cite[Theorem C]{MR955589} of the maximal subgroups of $G$, see also \cite{MR3924767}. If $q\geq 27$ and $H$ is a maximal subgroup of $G$, then one of the following cases occurs:
\begin{enumerate}[(M1)]
\item $H$ is a $1$-point stabilizer, isomorphic to the semidirect product of a group of order $q^3$ with the cyclic group of order $q-1$.
\item $H\cong \Ree(q_0)$, where $q_0=q^{1/\alpha}$, $\alpha$ prime. 
\item $H\cong C_2\times \mathsf{PSL}(2,q)$ is the centralizer of an involution.
\item $H\cong (C_2^2\times D_{(q+1)/2}) \rtimes C_3$ is the normalizer of a subgroup of order $4$.
\item $H\cong C_{q+\sqrt{3q}+1} \rtimes C_6$.
\item $H\cong C_{q-\sqrt{3q}+1} \rtimes C_6$.
\end{enumerate}
If $q=3$ then (M2) and (M6) do not occur. Moreover, $H\cong \mathsf{PSL}(2,8)$, or $H\cong (C_2^3\rtimes C_7)\rtimes C_3$ is the normalizer of a Sylow $2$-subgroup, that contains the subgroups (M3) and (M4). 

In $\Ree(q)$, the stabilizer of two points is cyclic of order $q-1$. Hence, the intersection of two Sylow $3$-subgroups is trivial. This implies that any Sylow $3$-subgroup $S_0$ of $G_0$ is contained in a unique Sylow $3$-subgroup $S$ of $G$, and $S$ is left invariant by $\psi_\alpha$. Conversely, let $S$ be a $\psi_\alpha$-invariant Sylow $3$-subgroup of $G$. The normalizer $N_G(S)$ is a parabolic subgroup of $G$, isomorphic to the semidirect product of a group of order $q^3$ with the cyclic group of order $q-1$. The centralizer of the field automorphism in $N_G(S)$ has order $q_0^3(q_0-1)$. This shows that $S_0=S\cap G_0$ is a Sylow $3$-subgroup in $G_0$. 

\begin{proposition} \label{pr:subdesign}
Let $q=3^{2n+1}$, $q_0=3^{2n_0+1}$ such that $2n_0+1$ divides $2n+1$. Then $\mathcal{R}(q)$ has a subdesign $\mathcal{D}\cong \mathcal{R}(q_0)$. Moreover, the stabilizer of $\mathcal{D}$ in $\Ree(q)$ is isomorphic to $\mathsf{Ree}(q_0)$. In particular, $\RRR$ is a subdesign of $\mathcal{R}(q)$ with stabilizer subgroup $\Ree(3)$. 
\end{proposition}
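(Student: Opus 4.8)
The plan is to exhibit the subdesign $\mathcal{D}$ directly from the group-theoretic setup recalled above. Recall that the points of $\mathcal{R}(q)$ are the Sylow $3$-subgroups of $G=\Ree(q)$, and the blocks are the fixed-point sets of involutions. Fix an outer automorphism $\psi=\psi_\alpha$ of order $\alpha$ where $2n_0+1 = (2n+1)/\alpha'$ for a suitable product $\alpha'$ of primes; more precisely, choose $\psi$ so that $G_0 = C_G(\psi) \cong \Ree(q_0)$. (Iterating over the prime factors of $(2n+1)/(2n_0+1)$ reduces to the case where $\alpha$ is prime, which is exactly case (M2); I will state this reduction and then work with a single prime $\alpha$, or simply take $\psi$ of the appropriate order directly using \cite[Lemma 6.2]{MR955589}.) The discussion preceding the proposition already establishes the key bijection: $S \mapsto S \cap G_0$ is a bijection between the $\psi$-invariant Sylow $3$-subgroups of $G$ and the Sylow $3$-subgroups of $G_0$, with inverse $S_0 \mapsto$ (the unique Sylow $3$-subgroup of $G$ containing $S_0$). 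This identifies the point set of $\mathcal{R}(q_0)$ with a subset $\mathcal{P}_0$ of the point set of $\mathcal{R}(q)$, namely the $\psi$-fixed points.

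First I would define $\mathcal{D}$ to be the induced substructure of $\mathcal{R}(q)$ on the point set $\mathcal{P}_0$: its blocks are the intersections with $\mathcal{P}_0$ of those blocks of $\mathcal{R}(q)$ that meet $\mathcal{P}_0$ in at least two points. The content is then to show $\mathcal{D} \cong \mathcal{R}(q_0)$. For this I would argue that an involution $t_0 \in G_0$ extends to an involution $t \in G$ (indeed $t_0 \in G_0 \le G$, so literally $t = t_0$), that $t$ is $\psi$-invariant since $t_0$ is centralized by $\psi$, and that the fixed-point set of $t_0$ acting on $\mathcal{P}_0$ corresponds under the above bijection to the fixed-point set of $t$ acting on the point set of $\mathcal{R}(q)$ — because a Sylow $3$-subgroup $S$ of $G$ is fixed by $t$ iff the corresponding $S_0 = S\cap G_0$ is fixed by $t_0$. (In the forward direction, if $S^t = S$ then $S^{t} = S$ and applying $\cap G_0$ gives $(S\cap G_0)^{t_0} = S\cap G_0$; conversely if $S_0^{t_0} = S_0$ then $t_0$ normalizes $S_0$, hence normalizes the unique Sylow $3$-subgroup $S$ of $G$ containing $S_0$.) Since every involution of $G_0$ fixes exactly $q_0+1$ points of $\mathcal{R}(q_0)$, this shows each block of $\mathcal{D}$ obtained from a $\psi$-invariant involution has exactly $q_0+1$ points; and conversely, if a block $B$ of $\mathcal{R}(q)$ meets $\mathcal{P}_0$ in $\ge 2$ points, its defining involution $t$ fixes two $\psi$-invariant Sylow subgroups, so $t$ normalizes the rank-$1$ torus they determine, which is $\psi$-invariant, forcing $t$ to be conjugate into $G_0$ — I would spell this out using that the pointwise stabilizer of two points is cyclic of order $q-1$ and contains a unique involution. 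Thus the blocks of $\mathcal{D}$ are exactly the traces of the $\psi$-invariant involutions' fixed-point sets, matching $\mathcal{R}(q_0)$, and $\mathcal{D}$ is a $2$-$(q_0^3+1, q_0+1, 1)$ design.

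Next I would identify the stabilizer. The subgroup $G_0 = C_G(\psi) \cong \Ree(q_0)$ acts on $\mathcal{P}_0$ as $\Ree(q_0)$ acts on $\mathcal{R}(q_0)$, so it stabilizes $\mathcal{D}$. For the reverse inclusion, let $K = \mathrm{Stab}_{\Ree(q)}(\mathcal{D})$; then $K$ contains $G_0 \cong \Ree(q_0)$, and $K$ acts faithfully (the action of $\Ree(q)$ on its points is faithful, since the only normal subgroup is trivial for $q>3$, and for $q=3$ one checks directly). Now $K$ normalizes $G_0$, hence lies in $N_G(G_0)$. Using Kleidman's classification (M1)--(M6) — specifically that a subgroup isomorphic to $\Ree(q_0)$ is either maximal of type (M2) or, when $q_0 = 3$ and $q>3$, is contained in a chain ending in a maximal (M2) — together with the fact that $\Ree(q_0)$ has trivial centre and trivial outer automorphisms realized inside $\Ree(q)$ acting trivially on $\mathcal{D}$... actually the cleanest route: $N_G(G_0)/G_0$ embeds into $\mathrm{Out}(\Ree(q_0))$, which is cyclic of order $2n_0+1$ generated by a field automorphism; but any field automorphism of $G_0$ that is induced by conjugation in $G$ and stabilizes $\mathcal{D}$ would have to fix $\mathcal{P}_0$ pointwise only if trivial — here I would instead observe that such an element of $N_G(G_0)$ centralizing $\psi$ lands back in $G_0$, and one that does not centralize $\psi$ would enlarge $C_G(\psi)$, contradiction. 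So $N_G(G_0) = G_0$ and hence $K = G_0 \cong \Ree(q_0)$. The final sentence — $\RRR$ is a subdesign of $\mathcal{R}(q)$ with stabilizer $\Ree(3)$ — is the special case $q_0 = 3$, valid since $2n_0+1 = 1$ trivially divides $2n+1$, using that $\Ree(3) \cong \mathsf{P\Gamma L}(2,8)$ still acts faithfully and the normalizer computation goes through with the (M2)-containment chain noted in the $q=3$ addendum to Kleidman's list.

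I expect the main obstacle to be the reverse inclusion for the stabilizer, i.e.\ ruling out that some element of $\Ree(q)$ outside $G_0$ preserves $\mathcal{D}$. The subtlety is that $G_0$ need not be maximal (it is contained in a maximal (M2) only when $\alpha$ is prime, and $\RRR$ in particular sits inside possibly several layers), so one cannot simply invoke maximality; instead one must pin down $N_G(G_0)$ via the structure of $\mathrm{Out}(\Ree(q_0))$ and the fact that $C_G(\psi) = G_0$ is already the full centralizer — an element normalizing $G_0$ either centralizes the canonical field automorphism $\psi$ (and so lies in $G_0$) or induces a nontrivial field automorphism, which would force the existence of a larger centralizer of a smaller-order field automorphism, contradicting the chain $G_0 \le G$. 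A careful but elementary argument with the tower of Ree subgroups handles this; I would present it as a short lemma on $N_{\Ree(q)}(\Ree(q_0))$ and then deduce the proposition.
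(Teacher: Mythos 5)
Your construction of the subdesign is essentially the paper's: both take $\mathcal{D}$ to consist of the $\psi_\alpha$-invariant Sylow $3$-subgroups and the $\psi_\alpha$-fixed involutions, and both use the bijection $S\mapsto S\cap G_0$ set up before the proposition; your extra check that a block of $\mathcal{R}(q)$ meeting $\mathcal{P}_0$ in two points comes from a $\psi_\alpha$-invariant involution (via the cyclic two-point stabilizer and its unique involution) is the right way to fill in what the paper leaves implicit. The divergence, and the problem, is the stabilizer computation. The paper argues directly on $T_0=\mathrm{Stab}_{\Ree(q)}(\mathcal{D})$: it is proper, hence lies in a maximal subgroup, which must be of type (M2) since it contains $\Ree(q_0)$; iterating down the tower, $T_0$ is itself a subfield Ree group, and preserving the involution set of $G_0$ forces $T_0\cong\Ree(q_0)$.

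Your route through $K\le N_G(G_0)$ has two genuine gaps. First, ``$K$ normalizes $G_0$'' is asserted without proof; the natural justification is that $K$ permutes the blocks of $\mathcal{D}$, hence the involutions of $G_0$, hence normalizes the subgroup they generate --- but for $q_0=3$ (the case actually invoked in the proof of Theorem \ref{thm:noemb}) the involutions generate only $G_0'\cong\mathsf{PSL}(2,8)$, of index $3$ in $G_0$. (This is repairable via $N_G(\mathsf{PSL}(2,8))\hookrightarrow\Aut(\mathsf{PSL}(2,8))\cong\Ree(3)$ after checking $C_G(\mathsf{PSL}(2,8))=1$, but you do not do this.) Second, and more seriously, the claim $N_G(G_0)=G_0$ is never actually proved: the dichotomy ``either centralizes $\psi$ (so lies in $G_0$) or induces a field automorphism, which would enlarge a centralizer, contradicting the chain'' identifies no contradiction. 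An element $g\in N_G(G_0)\setminus G_0$ inducing a field automorphism of order $\beta$ would simply yield a subgroup $\langle G_0,g\rangle$ of shape $\Ree(q_0).\beta$, and nothing in the chain $G_0\le G$ excludes this. Excluding it requires precisely the maximal-subgroup tower the paper uses (every overgroup of $\Ree(q_0)$ other than $G$ is a subfield Ree group, and $\Ree(q_0).\beta$ has the wrong order to be one), so the detour through $N_G(G_0)$ does not avoid Kleidman's classification --- it postpones the step where it is indispensable and leaves that step unproven. A minor further slip: $\Ree(q_0)$ is maximal of type (M2) exactly when $(2n+1)/(2n_0+1)$ is prime, so the containment chain is needed whenever that ratio is composite, not only when $q_0=3$.
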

\begin{proof}
Remember that the points and blocks of $\mathcal{R}(q)$ can be identified with the Sylow $3$-subgroups, and the involutions of $\Ree(q)$, respectively. Hence, any automorphism of $G=\Ree(q)$ induces an automorphism of $\mathcal{R}(q)$. The involutions fixed by $\psi_\alpha$ and the Sylow $3$-subgroups left invariant by $\psi_\alpha$ form a subdesign $\mathcal{D}$ of $\mathcal{R}(q)$. As explained above, $\psi_\alpha$-invariant involutions and Sylow $3$-subgroups of $G$ correspond to involutions and Sylow $3$-subgroups of $G_0$. Hence, $\mathcal{D} \cong \mathcal{R}(q_0)$. Let $T_0$ be the stabilizer of $\mathcal{D}$ in $G$; clearly $G_0\leq T_0$. Looking at the list of maximal subgroups of $G$ in \cite[Theorem C]{MR955589}, we see that either $T_0=\Ree(q)$, or $T_0$ is contained in a subgroup isomorphic to $\Ree(q_1)$ with $q_1=q^{1/\beta}$, $\beta$ prime. Repeating this argument, we conclude that $T_0$ itself is isomorphic to a Ree group $\Ree(q_*)$, where $\mathbb{F}_{q_*}$ is a subfield of $\mathbb{F}_q$. As $T_0$ preserves the set of involutions of $G_0$, the only possibility is $q_0=q_*$. 
\end{proof}

We are now in the position to prove Theorem \ref{thm:noemb}.

\begin{proof}[Proof of Theorem \ref{thm:noemb}]
Let us suppose that an embedding $\varphi:\mathcal{R}(q)\to \Pi$ exists. Let $I$ denote the set of involutions of $\Ree(q)$. In three steps, we show that all lines $\varphi(a)$ ($a\in I$) are concurrent, a contradiction.

Claim 1: For any Sylow $2$-subgroup $S=\{1,a_1,\ldots,a_7\}$ of $\Ree(q)$, the lines $\varphi(a_1), \ldots,\varphi(a_7)$ are concurrent. 

By Proposition \ref{pr:subdesign}, $\mathcal{R}(q)$ has a subdesign $\mathcal{D}\cong \RRR$, whose stabilizer is the subgroup $\Ree(3)$ of $\Ree(q)$. Both $\Ree(q)$ and $\Ree(3)$ have an elementary abelian Sylow $2$-subgroup of order $8$. Hence, we can assume w.l.o.g. that $S\leq \Ree(3)$. By the assumption, $\varphi(\mathcal{D})$ is contained in a pappian subplane $\Pi_0$. We apply Corollary \ref{coro:concurrent} to the restriction $\varphi:\mathcal{D}\to \Pi_0$ to prove the claim. 

Claim 2: For any $a\in I$, there is a point $P_a\in \varphi(a)$ of $\PG(2,F)$ such that $P_a\in \varphi(b)$ for all $b\in I$ with $ab=ba$. 

Fix $a\in I$. The centralizer $C_G(a)$ is $\langle a \rangle \times T$, with $T\cong \mathsf{PSL}(2,q)$. $T$ has a unique class $J$ of involutions. For arbitrary \textit{commuting involutions} $b,c\in J$, $\langle a,b,c \rangle$ is contained in a Sylow $2$-subgroup of $G$. By claim 1, the lines $\varphi(b),\varphi(c)$ intersect on $\varphi(a)$. Fix $b\in J$ and define $P_a=\varphi(a) \cap \varphi(b)$. Let $\Gamma$ be the graph with vertex set $J$ and $c_1,c_2\in J$ connected by an edge if and only if $c_1c_2=c_2c_1$. As $q\geq 27$ odd, $C_T(b)\cong D_{q+1}$ is maximal in $T$ by Dickson's Theorem. $T$ acts primitively on $J$ and Lemma \ref{lm:connectedcomponent} implies that $\Gamma$ is connected. Hence, for \textit{any} $c\in J$, there are elements $b_0=b,b_1,\ldots,b_k=c\in J$ such that $b_ib_{i+1}=b_{i+1}b_i$ for all $i=0,\ldots,k-1$. For all indices $i$, $\varphi(a)\cap \varphi(b_i)=\varphi(a)\cap \varphi(b_{i+1})$. Hence, $\varphi(a)$, $\varphi(b)$ and $\varphi(c)$ are concurrent. If $c$ is an involution of $C_G(a)$, not in $J\cup \{a\}$, then $ac\in J$ and $\varphi(a)$, $\varphi(b)$ and $\varphi(ac)$ are concurrent. Also, the lines $\varphi(a)$, $\varphi(c)$ and $\varphi(ac)$ are concurrent, that shows $P_a\in \varphi(c)$. 

Claim 3: All lines $\varphi(a)$ ($a\in I$) of the embedding are concurrent. 

If $a,b\in I$ commute then $P_a=P_b$. As above, construct the graph $\Gamma'$ with vertex set $I$ and $c_1,c_2\in I$ connected by an edge if and only if $c_1c_2=c_2c_1$. By \cite[Theorem C]{MR955589}, $G$ acts primitively on $I$, and $\Gamma'$ is connected by Lemma \ref{lm:connectedcomponent}. Fix arbitrary elements $a,b\in I$.  There are elements $a_0=a, a_1,\ldots,a_k=b\in I$ such that $a_ia_{i+1}=a_{i+1}a_i$ for all $i=0,\ldots,k-1$. Then $P_{a}=P_{a_0}=\ldots=P_{a_k}=P_b$, that finishes the proof.
\end{proof}

\printbibliography

\end{document}